\tikzstyle{vertex}=[circle ,draw, inner sep=0pt, minimum size=6pt]
\newcommand{\Ac}{\mathcal{A}}
\newcommand{\Bc}{\mathcal{B}}
\newcommand{\Cc}{\mathcal{C}}
\newcommand{\Dc}{\mathcal{D}}
\newcommand{\Ec}{\mathcal{E}}
\newcommand{\Fc}{\mathcal{F}}
\newcommand{\Oc}{\mathcal{O}}
\newcommand{\Sc}{\mathcal{S}}
\newcommand{\Tc}{\mathcal{T}}
\newcommand{\ZZ}{\mathbb{Z}}
\newcommand{\RR}{\mathbb{R}}
\newcommand{\TT}{\mathbb{T}}
\newcommand{\kk}{\Bbbk}
\newcommand{\Bf}{\mathfrak{B}}
\newcommand{\If}{\mathfrak{I}}
\newcommand{\xb}{\mathbf{x}}
\newcommand{\yb}{\mathbf{y}}
\newcommand{\set}[1]{\left\{ #1 \right\}}
\newcommand{\rbra}[1]{\left( #1 \right)}
\definecolor{suprise}{rgb}{1, 0.0, 0.69}
\definecolor{kojigreen}{rgb}{0, 1, 0}
\def\opn#1#2{\def#1{\operatorname{#2}}} 
\opn\conv{conv} \opn\dep{depth} \opn\Spec{Spec} \opn\cone{cone} \opn\ini{in} \opn\codeg{codeg} \opn\deg{deg}
\opn\Graph{Graph} \opn\sign{sign} \opn\Ehr{Ehr} \opn\rank{rank} \opn\type{type} \opn\reg{reg} \opn\cl{cl}
\opn\Hilb{Hilb} \opn\Indeg{Indeg} \opn\link{link} \opn\Tor{Tor} \opn\MNF{MNF} \opn\dep{depth} \opn\Stab{Stab}
\newtheorem{thm}{Theorem}[section]
\newtheorem{cor}[thm]{Corollary}
\newtheorem{lem}[thm]{Lemma}
\newtheorem{prop}[thm]{Proposition}
\newtheorem{q}[thm]{Question}
\theoremstyle{definition}
\theoremstyle{remark}
\newtheorem{rem}[thm]{Remark}
\title{Matroid polytopes with small rank}
\author{Masato Konoike, Koji Matsushita}
\address[M. Konoike]{Department of Pure and Applied Mathematics, Graduate School of Information Science and Technology, Osaka University, Suita, Osaka 565-0871, Japan}
\email{kounoike-m@ist.osaka-u.ac.jp}
\address[K. Matsushita]{Department of Pure and Applied Mathematics, Graduate School of Information Science and Technology, Osaka University, Suita, Osaka 565-0871, Japan}
\email{k-matsushita@ist.osaka-u.ac.jp}
\subjclass{Primary 
52B20; 
Secondary
05B35, 
52B40. 
} 
\keywords{matroids, matroid polytopes, rank of polytopes}
\begin{document}

\maketitle

\begin{abstract}
For a lattice polytope $P$, the rank of $P$ is defined by $F-(\dim P+1)$, where $F$ is the number of facets of $P$.
In this paper, we study matroid polytopes with small rank. More precisely, we characterize matroid independence polytopes and graphic matroid base polytopes with rank at most three. Furthermore, using this characterization, we investigate their relationships with order polytopes, stable set polytopes, and edge polytopes.
\end{abstract}

\section{Introduction}
Matroid independence polytopes (resp. matroid base polytopes) are defined as the convex hull of the indicator vectors of all independent sets (resp. bases) of matroids.
These polytopes have been studied from various perspectives, including combinatorics, commutative algebra, and algebraic geometry.
For example, normality (\cite{Herzog2002Discrete,white1977basis}), defining ideals of toric rings (\cite{Blasiak2008TheToric,Lason2014OnThe,white1980AUnique}), Gorensteinness (\cite{hibi2021gorenstein,kolbl2020gorenstein,Lason2023GorMat}) and nearly Gorensteinness (\cite{hall2023nearly}) of matroid polytopes have been investigated.
Moreover, the relationships between them and other classes, such as order polytopes and stable set polytopes, are also studied (\cite{aliniaeifard2018stable,benedetti2023lattice}).
In this paper, we focus on matroid polytopes with small rank and analyze their properties and their relationships with other polytopes.

For a lattice polytope $P$ with $\dim P\ge 1$, the \textit{rank} of $P$ is defined by
\[
\rank P:=F-(\dim P+1),
\]
where $F$ denotes the number of facets of $P$.
Note that $\rank P$ is a nonnegative integer.
The rank of $P$ is a notable invariant due to its commutative ring-theoretic meaning; $\rank P$ is equal to the rank of the divisor class group of the toric ring $\kk[P]$ of $P$, where $\kk$ is a field, if $\kk[P]$ is normal (cf. \cite[Theorem~9.8.19]{villarreal2001monomial}).
In \cite{matsushita2022three,matsushita20240/1}, $(0,1)$-polytopes have been investigated via their rank.
In particular, in the paper \cite{matsushita2022three}, ordered polytopes, stable set polytopes, and edge polytopes with small rank have been classified up to unimodular equivalence by characterizing the underlying combinatorics objects.
Here, we say that two lattice polytopes $P,P' \subset \RR^d$ are \textit{unimodularly equivalent} 
if there are a lattice vector ${\bf v} \in \ZZ^d$ and a unimodular transformation $f \in \mathrm{GL}_d(\ZZ)$ such that $P'=f(P)+{\bf v}$.
We write $P\cong P'$ if two lattice polytopes $P$ and $P'$ are unimodularly equivalent.
It is known that if two lattice polytopes $P,P' \subset \RR^d$ have the integer decomposition property, then one has $P\cong P'$ if and only if $\kk[P]$ and $\kk[P']$ are isomorphic as $\kk$-algebras (cf. \cite[Theorem~5.22]{bruns2009polytopes}).

The goal of this paper is to build on these works by classifying matroid polytopes with small rank (in other words, classifying the toric rings of matroid polytopes whose divisor class groups have small rank), and comparing them with other classes of polytopes.

For a graph $G$, let $M(G)$ be the graphic matroid.
We characterize matroid independence polytopes and graphic matroid base polytopes with rank at most three as follows:

\begin{thm}[{Theorem~\ref{thm: classifyIM} and Corollary~\ref{cor:independence}}]\label{mainthm1}
    Let $M$ be a connected matroid and let $P(M)$ be the matroid independence polytope of $M$. Then we have $\rank P(M)=0$ or $\rank P(M)\ge 3$. Moreover, we can see that
    \begin{enumerate}[\normalfont(1)]
        \item $\rank P(M)=0$ if and only if $P(M)\cong P(M(\Ac_s))$ for some $s\in \ZZ_{>0}$.
        \item $\rank P(M)=3$ if and only if $P(M)\cong P(M(\Dc_{s_1,s_2,s_3}))$ for some $s_1, s_2, s_3\in \ZZ_{\ge 0}$.
    \end{enumerate}
    In particular, if $\rank P(M)\le 3$, then $P(M)$ is a graphic matroid independence polytope.
\end{thm}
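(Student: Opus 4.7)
The plan is to translate the rank computation into a count of the inseparable flats of $M$, and then carry out a careful matroid-structural analysis to enumerate the possibilities. Concretely, I would begin by recalling (or re-proving, in the style of Edmonds/Fujishige) the facet description of the independence polytope: for a connected matroid $M$ on $E$ with $n=|E|$, the polytope $P(M)$ has dimension $n$ and its facets are the $n$ nonnegativity constraints $x_i \ge 0$ together with one rank constraint $\sum_{j\in F} x_j \le r(F)$ for each \emph{inseparable flat} $F$ (a flat $F\subseteq E$ with $M|_F$ connected). Writing $\Fc(M)$ for the set of inseparable flats,
\[
\rank P(M) \;=\; n + |\Fc(M)| - (n+1) \;=\; |\Fc(M)| - 1,
\]
so the entire classification reduces to a combinatorial enumeration.

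The main step is then a dichotomy lemma asserting $|\Fc(M)| = 1$ or $|\Fc(M)| \ge 4$. Every rank-one flat of $M$ is a parallel class $C$, which is inseparable because $M|_C \cong U_{1,|C|}$ is connected; together with $E$ (inseparable as $M$ is connected) this gives $|\Fc(M)| \ge k+1$, where $k$ denotes the number of parallel classes, provided $r(M) \ge 2$ so that $E$ is distinct from every rank-one flat. Using the classical fact that the simplification of a connected matroid is again connected, and that the only simple matroid of rank $r$ on $r$ elements is $U_{r,r}$, which is disconnected for $r \ge 2$, we deduce $k \ge r+1$ whenever $r(M) \ge 2$. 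Hence $|\Fc(M)| \ge r+2 \ge 4$ in this range. In the remaining case $r(M)=1$, necessarily $M \cong U_{1,n}$ and $\Fc(M) = \{E\}$, giving $|\Fc(M)|=1$. This dichotomy immediately yields the claim $\rank P(M) = 0$ or $\rank P(M) \ge 3$.

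The rank-zero and rank-three characterizations then follow quickly. For $\rank P(M) = 0$, the dichotomy forces $r(M)=1$, so $M \cong U_{1,s}$; identifying $U_{1,s}$ with $M(\Ac_s)$, where $\Ac_s$ is the multigraph on two vertices with $s$ parallel edges, and invoking that matroid isomorphism induces a unimodular equivalence of independence polytopes via coordinate permutation, we get $P(M)\cong P(M(\Ac_s))$. For $\rank P(M) = 3$, the bound forces $r(M)=2$ and $|\Fc(M)|=4$; since a rank-two matroid has no inseparable flats other than its parallel classes and $E$ (every proper non-empty flat is rank one), this is equivalent to $M$ having exactly three parallel classes $C_1, C_2, C_3$. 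Any such matroid is determined up to isomorphism by the sizes $|C_i|$, and is realized as the graphic matroid $M(\Dc_{s_1,s_2,s_3})$ of the triangle with $s_i+1 = |C_i|$ parallel edges along side $i$, so $P(M)\cong P(M(\Dc_{s_1,s_2,s_3}))$. Since both $\Ac_s$ and $\Dc_{s_1,s_2,s_3}$ are graphs, the ``in particular'' statement follows at once.

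The step I expect to demand the most care is the facet description used in the first paragraph: one must verify that every inseparable flat defines a distinct facet and, conversely, that no further inequalities (for non-closed $F$, or for flats with disconnected restrictions) are facet-defining. Once this characterization is in hand, the arguments on parallel classes and on connected simple matroids of small rank are clean and essentially elementary.
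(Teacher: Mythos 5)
Your proposal is correct and shares the paper's overall framework---both reduce $\rank P(M)$ to counting the indecomposable (in your terminology, inseparable) flats via the facet description of Lemma~\ref{facets:independent}, and both identify the parallel classes together with $E$ itself as the flats to be counted---but you establish the crucial lower bound by a genuinely different argument. The paper does not state your dichotomy ``$1$ or at least $4$ indecomposable flats''; instead it rules out ranks $1$, $2$, and the bad subcases of rank $3$ one at a time by a polyhedral argument: if the number of parallel classes is at most $r(E)$, then summing the facet inequalities $\sum_{e\in F_i}x(e)\le 1$ over all parallel classes $F_i$ shows that $\sum_{e\in E}x(e)\le r(E)$ is implied by the other facets, contradicting the fact that the indecomposable flat $E$ must define a facet of its own. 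Your route---the simplification of a connected matroid is connected, and a connected simple matroid of rank $r\ge 2$ cannot have exactly $r$ elements since $U_{r,r}$ is disconnected---proves the stronger and cleaner statement that a connected matroid of rank $r\ge 2$ has at least $r+1$ parallel classes, hence $\rank P(M)\ge r(M)+1$; this packages the whole case analysis into a single lemma, at the cost of importing two standard matroid facts that the paper's self-contained polyhedral computation avoids. The endgame is the same in both treatments: rank $3$ forces $r(M)=2$ with exactly three parallel classes, and a loopless rank-$2$ matroid is determined by its partition into parallel classes (a dependent pair of non-loops is a parallel pair), so $M\cong M(\Dc_{s_1,s_2,s_3})$. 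You assert this last determination without the one-line justification that the paper spells out, and your indexing puts $s_i+1$ parallel edges on side $i$ where the paper's $\Dc_{s_1,s_2,s_3}$ has $s_i$; neither is a substantive gap (indeed the paper itself wavers between $s_i\in\ZZ_{>0}$ in Theorem~\ref{thm: classifyIM} and $s_i\in\ZZ_{\ge 0}$ in the statement above, and connectivity of $M(\Dc_{s_1,s_2,s_3})$ requires every side to carry at least one edge).
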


\begin{thm}[{Theorem~\ref{thm:smallmultibasepolytope}}]\label{mainthm2}
    Let $M$ be the graphic matroid of a 2-connected (not necessarily simple) graph, and let $B(M)$ be the matroid base polytope of $M$.
    Then we have the following:
    \begin{enumerate}[\normalfont(1)]
        \item $\rank B(M)=0$ if and only if $B(M)\cong B(M(\Ac_{s+1}))$ for some $s\in \ZZ_{>0}$.
        \item $\rank B(M)=1$ if and only if $B(M)\cong B(M(\Bc_{s,p}))$ for some $s,p\in \ZZ_{>0}$.
        \item $\rank B(M)=2$ if and only if $B(M)\cong B(M(\Bc_{s_1,s_2,p}))$ for some $s_1,s_2\in \ZZ_{>0}$ and $p\in \ZZ_{\ge 0}$.
        \item $\rank B(M)=3$ if and only if $B(M)$ is unimodularly equivalent to one of the following matroid base polytopes:
        \begin{itemize}
        \item[(i)] $B(M(\Bc_{s_1,s_2,s_3,p}))$ for some $s_1,s_2,s_3\in \ZZ_{>0}$ and $p\in \ZZ_{\ge 0}$;
        \item[(ii)] $B(M(\Cc_{s,t,p,q}))$ for some $s,q\in \ZZ_{>0}$ and $t,p\in \ZZ_{\ge 0}$;
        \item[(iii)] $B(M(\Dc_{s_1+1,s_2+1,s_3+1}))$ for some $s_1,s_2,s_3\in \ZZ_{> 0}$.
        \end{itemize}
    \end{enumerate}
\end{thm}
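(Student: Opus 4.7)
\medskip

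\noindent\textbf{Proof proposal.}
The plan is to reduce the classification to a combinatorial count of flacets of the graphic matroid $M(G)$. Recall that for a connected matroid $M$ on ground set $E$, the facets of the base polytope $B(M)$ are in bijection with the proper flacets of $M$, that is, the nonempty proper subsets $F \subsetneq E$ such that both $M|_F$ and $M/F$ are connected matroids. Since $M(G)$ is a connected matroid whenever $G$ is 2-connected, one has $\dim B(M(G)) = |E(G)| - 1$, and consequently $\rank B(M(G)) = \phi(G) - |E(G)|$, where $\phi(G)$ denotes the number of proper flacets of $M(G)$.

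The first concrete step is to translate the flacet condition into a graph-theoretic one: a subset $F \subsetneq E(G)$ gives a facet of $B(M(G))$ exactly when the submultigraph induced by the edges of $F$ is 2-connected as a matroid (allowing a single edge or a digon as trivial cases) and the contraction $G/F$ is also 2-connected. After accounting for the trivial flacets coming from singletons and their complements, the remaining ones correspond to the 2-vertex cuts of $G$ and the bundles of internally disjoint paths glued at these cuts; this renders $\phi(G)$ computable directly from the block and 2-cut structure of $G$.

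I would then carry out the classification by induction on this 2-cut structure, equivalently on the SPQR-tree of $G$, using the ear decomposition to build 2-connected multigraphs from a single cycle or digon by successive ear additions. The base case $G$ a cycle or a pair of parallel edges yields $\rank B(M(G)) = 0$ and recovers the $\Ac$ family of part (1). Attaching a single long ear to an existing 2-cut grows a new parallel-path bundle, producing the $\Bc$ family that governs parts (2), (3) and case (i) of part (4). Two genuinely new generalizations appear only at rank three: either a new ear attaches at a distinct 2-cut and creates a 2-sum (producing the $\Cc$ family of case (ii)), or the underlying graph is a theta graph on three internally disjoint paths with extra degree-two subdivisions (giving the $\Dc$ family of case (iii)). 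For each listed family I would also verify the asserted rank by a direct flacet count.

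The main obstacle is the exhaustiveness of the rank three analysis: one must show that every 2-connected multigraph with $\phi(G) - |E(G)| = 3$ belongs to one of the three families, which amounts to proving that any additional structural feature -- a fourth parallel bundle, a second independent 2-cut bundle, or a denser internal block -- forces at least four extra flacets. A careful case analysis on the SPQR-tree should suffice. Finally, the unimodular-equivalence statement is handled via Whitney's 2-isomorphism theorem: two graphs produce the same graphic matroid if and only if they are related by a sequence of Whitney 2-flips, which guarantees that the families listed exhaust all unimodular-equivalence classes of $B(M(G))$.
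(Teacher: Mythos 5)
Your overall strategy---reduce everything to a facet count via the flacet description, then induct on an ear decomposition of the $2$-connected multigraph---is the same skeleton the paper uses, but as written the proposal has one technical error and one genuine gap where the actual content of the proof lives. First, the facets of $B(M)$ for a connected matroid $M$ are \emph{not} in bijection with the proper flacets alone: by the Feichtner--Sturmfels/Kim description (Lemma~\ref{facets:base}) they come in two families, the inequalities $x(e)\ge 0$ for each $e$ with $M\setminus e$ connected (deletable edges) \emph{and} the inequalities $\sum_{e\in F}x(e)\le r(F)$ for flacets $F$. Since $E\setminus\{e\}$ is never a flat when $M$ is connected and $|E|\ge 2$, the nonnegativity facets are not absorbed into the flacet count, so your formula $\rank B(M(G))=\phi(G)-|E(G)|$ undercounts. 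This is not cosmetic: the paper's key Lemma~\ref{lem:adding path} works precisely by tracking how \emph{both} types of facets change when an ear (or a parallel edge) is added, and the dichotomy ``deletable versus contractible'' is what makes the bookkeeping close up.

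Second, and more seriously, the exhaustiveness of the rank-$3$ case---which you correctly identify as the main obstacle---is exactly the part you have not supplied, and ``a careful case analysis on the SPQR-tree should suffice'' is where the entire proof actually happens. The paper resolves it by explicitly enumerating the simple $2$-connected graphs obtained from a cycle by adding at most three ears (the graphs $G_1,\dots,G_9$), computing the rank of each via its planar dual, showing all but a short list already have rank at least $4$, and then using the adding-path lemma to control exactly which edges of the surviving skeletons may be replaced by multiedges without pushing the rank past $3$. Without this enumeration there is no argument that a fourth parallel bundle or a denser block forces four extra facets. Relatedly, Whitney's $2$-isomorphism theorem is not the right tool for the unimodular-equivalence statement: it governs when two graphs have isomorphic graphic matroids, whereas the equivalence actually exploited in the paper is planar duality, $B(M(G))\cong B(M(G^\ast))$, which is how the families $\Bc_{s_1,\dots,s_n,p}$, $\Cc_{s,t,p,q}$ and $\Dc_{s_1,s_2,s_3}$ arise as duals of the enumerated skeletons. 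I would recommend replacing the SPQR/Whitney framing with an explicit ear-by-ear enumeration and a precise version of the adding-path lemma.
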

\noindent See Section~\ref{subsec:graph} for the definitions of the graphs $\Ac_s$, $\Bc_{s_1,\ldots,s_n,p}$, $\Cc_{s,t,p,q}$ and $\Dc_{s_1,s_2,s_3}$.

Moreover, we examine the relationships among the following five equivalent classes:

\noindent${\bf MI}_n:=\set{\text{matroid independence polytopes with rank $n$}}/\cong$;

\noindent${\bf GMB}_n:=\set{\text{graphic matroid base polytopes with rank $n$}}/\cong$;

\noindent${\bf Order}_n:=\set{\text{order polytopes with rank $n$}}/\cong$;

\noindent${\bf Stab}_n:=\set{\text{stable set polytopes of perfect graphs with rank $n$}}/\cong$;

\noindent${\bf Edge}_n:=\set{\text{edge polytopes of graphs satisfying the odd cycle condition with rank $n$}}/\cong$.

\begin{thm}[{Propositions~\ref{prop:n=01},\ref{prop:n=2} and \ref{prop:n=3}}]\label{mainthm3}
We have the following:
\begin{enumerate}[\normalfont(1)]
    \item ${\bf MI}_0={\bf GMB}_0={\bf Stab}_0$.
    \item ${\bf MI}_n\subsetneq {\bf GMB}_n\subsetneq {\bf Stab}_n$ for $n=1,2$.
    \item For any $\Sc \in \set{{\bf MI}_2,{\bf GMB}_2}$, there is no inclusion between $\Sc$ and ${\bf Edge}_2$.
    \item For any two distinct elements $\Sc,\Tc\in \{{\bf MI}_3, {\bf GMB}_3,{\bf Order}_3, {\bf Stab}_3, {\bf Edge}_3\}$, there is no inclusion between $\Sc$ and $\Tc$.
    In addition, we have ${\bf MI}_3 \not\subset {\bf Order}_3\cup{\bf Stab}_3\cup{\bf Edge}_3\cup{\bf GMB}_3$ and ${\bf GMB}_3\subsetneq {\bf Stab}_3\cup{\bf Edge}_3$.
\end{enumerate}
\end{thm}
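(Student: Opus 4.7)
The plan is to combine the classifications of Theorems~\ref{mainthm1} and~\ref{mainthm2} with the analogous rank-$n$ classifications of ${\bf Order}_n$, ${\bf Stab}_n$, and ${\bf Edge}_n$ from \cite{matsushita2022three}, reducing each assertion to a comparison of explicit parametrized lists of lattice polytopes up to unimodular equivalence.

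For parts (1) and (2) the argument is nearly immediate from the classifications. In rank $0$ every class collapses to the standard unimodular simplices: these are realized respectively by $P(M(\Ac_s))$, $B(M(\Ac_{s+1}))$, and by the stable set polytope of a complete graph, and they coincide after translation, yielding ${\bf MI}_0={\bf GMB}_0={\bf Stab}_0$. For $n=1,2$, Theorem~\ref{mainthm1} forces ${\bf MI}_n=\emptyset$, while Theorem~\ref{mainthm2} produces nonempty ${\bf GMB}_n$ via the families $B(M(\Bc_{s,p}))$ and $B(M(\Bc_{s_1,s_2,p}))$; these are then shown to be stable set polytopes of concrete perfect graphs via direct constructions, and strictness of ${\bf GMB}_n\subsetneq {\bf Stab}_n$ is witnessed by a rank-$n$ stable-set representative from \cite{matsushita2022three} that cannot match any $\Bc$-type base polytope, distinguished by the $f$-vector, vertex count, or normalized volume.

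For part (3) I would list the representatives of ${\bf Edge}_2$ from \cite{matsushita2022three}, and for each $\Sc\in\{{\bf MI}_2,{\bf GMB}_2\}$ exhibit an explicit $P\in\Sc\setminus{\bf Edge}_2$ and an explicit $Q\in {\bf Edge}_2\setminus\Sc$, separated by numerical invariants. Part (4) proceeds by the same method but is substantially more elaborate: for each ordered pair of distinct rank-$3$ classes I would select a witness polytope from the explicit families in Theorem~\ref{mainthm1}(2), Theorem~\ref{mainthm2}(4), and the rank-$3$ lists of \cite{matsushita2022three}, and rule out unimodular equivalence via an invariant preserved under $\mathrm{GL}_d(\ZZ)$. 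The asymmetric claim ${\bf MI}_3\not\subset {\bf Order}_3\cup{\bf Stab}_3\cup{\bf Edge}_3\cup{\bf GMB}_3$ will be handled by taking $P(M(\Dc_{s_1,s_2,s_3}))$ for a suitably chosen parameter tuple and excluding membership in each of the four other classes one by one. For the inclusion ${\bf GMB}_3\subsetneq {\bf Stab}_3\cup{\bf Edge}_3$, the strategy is family-by-family: realize $B(M(\Bc_{s_1,s_2,s_3,p}))$ and $B(M(\Cc_{s,t,p,q}))$ in types (i) and (ii) as stable set polytopes of explicit perfect graphs, and realize $B(M(\Dc_{s_1+1,s_2+1,s_3+1}))$ in (iii) as an edge polytope of a graph satisfying the odd cycle condition; strictness is then certified by a single rank-$3$ stable-set or edge polytope that is not graphic-matroid-base.

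The main obstacle will be the combinatorial book-keeping inside part (4). Many pairs of classes must be separated simultaneously, and the chosen invariants must behave uniformly across multi-parameter families such as $\Bc_{s_1,s_2,s_3,p}$ and $\Cc_{s,t,p,q}$; in particular, the inclusion ${\bf GMB}_3\subseteq {\bf Stab}_3\cup{\bf Edge}_3$ cannot be settled by an invariant comparison, but instead demands explicit unimodular equivalences between the matroid base polytopes of the $\Bc$, $\Cc$, $\Dc$ families and concrete stable-set or edge polytopes, which is the most delicate step of the proof.
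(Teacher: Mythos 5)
Your plan follows the paper's overall strategy (reduce to the explicit classifications, realize the base-polytope families as stable set or edge polytopes via explicit unimodular maps, and separate classes by invariants on witness polytopes), but it contains one genuine error that would derail parts (2) and (3): the claim that Theorem~\ref{mainthm1} forces ${\bf MI}_n=\emptyset$ for $n=1,2$. Theorem~\ref{mainthm1} classifies independence polytopes of \emph{connected} matroids only, whereas ${\bf MI}_n$ is defined over all matroids. For a disconnected matroid $M=M_1\oplus M_2$ one has $P(M)=P(M_1)\times P(M_2)$ and $\rank(P\times Q)=\rank P+\rank Q+1$, so two rank-$0$ factors already give a rank-$1$ independence polytope: $P(M(\Ac_{s_1}\sqcup\Ac_{s_2}))\in{\bf MI}_1$, and triple products populate ${\bf MI}_2$. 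If ${\bf MI}_2$ were empty, then ${\bf MI}_2\subset{\bf Edge}_2$ would hold vacuously and part (3) would be \emph{false}; the paper's witness for ${\bf MI}_2\not\subset{\bf Edge}_2$ is precisely a disconnected example, $P(M(\Ac_1\sqcup\Ac_1\sqcup\Ac_1))$. Likewise, the strict inclusions ${\bf MI}_n\subsetneq{\bf GMB}_n$ require exhibiting a base polytope not unimodularly equivalent to any of these product polytopes (the paper rules out $B(M(\Bc_{1,1}))$ against $P(M(\Ac_1\sqcup\Ac_2))$ by a dimension count plus a computer check), not merely noting nonemptiness of ${\bf GMB}_n$.

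The same oversight propagates to your treatment of ${\bf GMB}_3\subseteq{\bf Stab}_3\cup{\bf Edge}_3$: your family-by-family realization of the $\Bc$, $\Cc$, $\Dc$ types handles only connected graphic matroids, but a graphic matroid with several $2$-connected components can also have a rank-$3$ base polytope (again via the product formula), and that case must be absorbed separately --- the paper does so by observing that such products land in ${\bf Stab}_3$ because ${\bf GMB}_k\subset{\bf Stab}_k$ for $k\le 2$ and stable set polytopes are closed under products (corresponding to disjoint unions of perfect graphs). Aside from this connectivity issue, the remainder of your outline is consistent with the paper: the explicit equivalences you anticipate are exactly Lemmas~\ref{lem:equiBS}, \ref{lem:CcongO} and \ref{lem:DcongE} in the text, and the pairwise separations in part (4) are settled there by small explicit witnesses (e.g.\ a $3$-dimensional independence polytope with $7$ vertices, the poset $\Pi$ with $\Oc_\Pi\in{\bf Order}_3\cap{\bf Stab}_3$, and $P_{K_{2,2,2}}$) checked by computer rather than by hand-chosen invariants.
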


\medskip

The structure of this paper is as follows.
In Section~\ref{sec:pre}, we prepare the materials required for later discussion.
We recall definitions and notation associated with (graphic) matroids and graphs.
In Section~\ref{sec:main}, we characterize matroid independence polytopes and matroid base polytopes with rank at most 3, respectively.
In Section~\ref{sec:relation}, we study the relationships among ${\bf MI}_n$, ${\bf GMB}_n$, ${\bf Order}_n$, ${\bf Stab}_n$ and ${\bf Edge}_n$ in the case where $n \leq 3$.

\subsection*{Acknowledgements}
The second author is partially supported by Grant-in-Aid for JSPS Fellows Grant JP22J20033.

\section{Preliminaries}\label{sec:pre}

The goal of this section is to prepare the required objects for discussions of our main results.

\subsection{Matroids and matroid polytopes}
First, we recall matroids and their matroid polytopes (consult, e.g., \cite{oxley2011matroid} for the introduction).
Throughout this subsection, let $M$ be a matroid on the ground set $E$ with the set of independent sets $\If$, the set of bases $\Bf$ and the rank function $r:2^E \to \ZZ_{\geq 0}$.

We introduce some fundamental objects associated with matroids.
\begin{itemize}
    \item For a subset $A\subset E$, let $\cl(A)$ be the closure of $A$, that is, $\cl(A):=\{e\in E : r(A\cup\set{e})=r(A)\}$. We say that a subset $F \subset E$ is a \textit{flat} if $\cl(F) = F$.
    \item A subset $A\subset E$ is called \textit{indecomposable} if $A$ cannot be decomposed into proper subsets $A = A_1 \sqcup A_2$ such that $r(A) = r(A_1) + r(A_2)$. 
    \item A set $A$ is called \textit{connected} if every two elements of A belong to some circuit contained in $A$.
    \item We say that $M$ is \textit{connected} if $E$ is connected.
    We can see that $M$ is connected if and only if $E$ is indecomposable if and only if $M$ is not a direct sum of two or more nontrivial matroids.
    \item We say that $A\subset E$ is a \textit{flacet} if $A$ is a flat such that the restriction of $M$ to $A$ and the contraction of $A$ in $M$ are connected.
\end{itemize}

\medskip

For a set $A$ and its subset $S$, let $\chi_S$ be the characteristic vector of $S$ in $\RR^A$, that is, $\chi_S(a)=1$ if $a\in S$ and $\chi_S(a)=0$ otherwise.

We define two lattice polytopes associated with $M$:
\[
P(M):=\conv(\set{\chi_I : I\in \If}) \quad  \text{ and } \quad B(M):=\conv(\set{\chi_B : B\in \Bf}).
\]
We call $P(M)$ (resp. $B(M)$) the \textit{(matroid) independence polytope} (resp. \textit{(matroid) base polytope}) of $M$.

We also define the \textit{product} of two polytopes $P\subset \RR^d$ and $Q\subset \RR^e$ as
$$P\times Q=\{(\xb,\yb) : \xb\in P,\yb\in Q\}\subset \RR^{d+e}.$$
We can see that $P\times Q$ is a polytope of dimension $\dim P + \dim Q$, whose nonempty faces are the products of nonempty faces (including itself) of $P$ and $Q$.
In particular, the number of facets of $P\times Q$ is equal to $F_P+F_Q$, where $F_P$ and $F_Q$ denote the numbers of facets of $P$ and $Q$, respectively.
Therefore, we have $\rank P\times Q= \rank P + \rank Q +1$.

If $M$ has two connected components $M_1$ and $M_2$, then we have $P(M)=P(M_1)\times P(M_2)$ and $B(M)=B(M_1)\times B(M_2)$.
In particular, if $M$ has a loop $e$, then $P(M)$ (resp. $B(M)$) is unimodularly equivalent to $P(M\setminus e)$ (resp. $B(M\setminus e)$).
Thus, in classifying matroid polytopes with small rank, it suffices to consider connected matroids.

Let $\Bf^\ast:=\{E - B : B \in \Bf\}$. Then $\Bf^\ast$ forms the set of bases of a matroid on $E$, called the \textit{dual} matroid $M^\ast$ of $M$.
We can see that $B(M^\ast) = (1, \ldots,1) - B(M)$, and hence $B(M)$ and $B(M^\ast)$ are unimodularly equivalent.

\medskip

The dimension and facet descriptions of the independence polytope and base polytope of a matroid are already given as follows:

\begin{lem}[{\cite[Theorem 40.5]{schrijver2003combinatorial}}]\label{facets:independent}
    Suppose that $M$ has no loops. 
    Then we have $\dim P(M)=|E|$ and
    \begin{align*}
    P(M) 
    =
    \left\{ 
    x \in \mathbb{R}^{E} \; : \;
    \begin{array}{rll}
        x(e) &\geq 0, & \text{ for every } e \in E \vspace{0.2cm}\\
        \sum_{e \in F}x(e) & \leq r(F), &\text{ for every indecomposable flat }F 
    \end{array}
    \right\}.
\end{align*}
    In particular, each supporting hyperplane of the above half spaces defines a facet of $P(M)$.
\end{lem}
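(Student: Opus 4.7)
The plan is to combine Edmonds' matroid polytope theorem with a standard closure-and-decomposition reduction to arrive at the indicated indecomposable-flat inequalities.

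For the dimension, since $M$ is loopless each singleton is independent, so the origin and the unit vectors $\chi_{\{e\}}$ for $e\in E$ are $|E|+1$ affinely independent vertices of $P(M)$, giving $\dim P(M)=|E|$.

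Next I would recall Edmonds' unrefined description
\[
P(M)=\left\{x\in\RR^E_{\ge 0}:\sum_{e\in A}x(e)\le r(A)\ \text{for all}\ A\subseteq E\right\},
\]
which follows from the greedy algorithm: applied to $\max\langle w,x\rangle$ over the right-hand polyhedron, it produces an integral optimum of the form $\chi_I$ with $I\in\If$, so every vertex of the right-hand side lies in $P(M)$. Then I would reduce the constraint system to indecomposable flats in two steps. First, replacing $A$ by $\cl(A)$ preserves $r(A)$ but enlarges $\sum_{e\in A}x(e)$ when $x\ge 0$, so only flats are needed. Second, if $F$ is a flat with a nontrivial decomposition $F=F_1\sqcup F_2$ satisfying $r(F)=r(F_1)+r(F_2)$, then both $F_1$ and $F_2$ are flats of $M$: the submodular inequality $r(F\cup\{e\})+r(F_1)\le r(F_1\cup\{e\})+r(F)$ handles $e\in E\setminus F$, while the separator identity $r(F_1\cup\{e\})=r(F_1)+1$ handles $e\in F_2$. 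The inequality for $F$ is then the sum of those for $F_1$ and $F_2$, so decomposable flats yield only redundant inequalities.

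Finally I would verify that each surviving inequality is facet-defining. The face $\{x\in P(M):x(e)=0\}$ coincides with $P(M\setminus e)$ embedded in $\RR^E$, which is $(|E|-1)$-dimensional since $M\setminus e$ is loopless. For an indecomposable flat $F$, pick a basis $B$ of $M|F$. Since $M|F$ is connected, $\dim B(M|F)=|F|-1$, so the indicator vectors of bases of $M|F$ furnish $|F|$ affinely independent points in $\RR^F$, all lying on the hyperplane $\sum_{e\in F}x(e)=r(F)$. Zero-extending them to $\RR^E$ and adjoining the vertices $\chi_{B\cup\{e\}}$ for $e\in E\setminus F$ (each independent because $F$ being a flat forces $r(F\cup\{e\})=r(F)+1$) produces $|E|$ affinely independent lattice points on the facet. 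The main delicate step is this last dimension count: indecomposability of $F$ enters precisely through the equality $\dim B(M|F)=|F|-1$, without which the promised facet dimension would drop.
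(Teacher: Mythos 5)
The paper does not prove this lemma at all: it is imported verbatim from Schrijver (Theorem~40.5), so there is no in-paper argument to compare against. Your blind proof is correct and is essentially the standard derivation behind that citation: Edmonds' polymatroid description of $P(M)$ via the greedy algorithm, reduction of the constraint set first to flats (taking closures only strengthens the left-hand side when $x\ge 0$) and then to indecomposable flats (a decomposable flat $F=F_1\sqcup F_2$ with $r(F)=r(F_1)+r(F_2)$ has both parts flats --- by submodularity against $F$ for $e\notin F$ and by additivity of rank over a separator for $e\in F_2$ --- so its inequality is the sum of theirs), and finally an explicit exhibition of $|E|$ affinely independent points on each remaining hyperplane. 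Two small points worth making explicit if this were written out: the redundancy step is an induction on $|F|$, since $F_1$ and $F_2$ need not themselves be indecomposable; and your facet argument for $\sum_{e\in F}x(e)\le r(F)$ invokes $\dim B(M|_F)=|F|-1$ for the connected matroid $M|_F$, which is itself the (cited) dimension statement of Lemma~\ref{facets:base}, so your proof is self-contained only modulo that second external fact. Neither point is a gap --- the argument goes through, and in exchange for the dependency on Lemma~\ref{facets:base} you get a clean, uniform construction of the affinely independent points ($|F|$ basis vectors of $M|_F$ plus the $\chi_{B\cup\{e\}}$ for $e\in E\setminus F$, whose independence is exactly where the flat condition is used).
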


\begin{lem}[{\cite{feichtner2004matroid,kim2010flag}}]\label{facets:base}
    Suppose that $M$ is connected.
    Then we have $\dim B(M)=|E|-1$ and 
    \begin{align*}
    B(M) 
    =
    \left\{ 
    x \in \mathbb{R}^{E} \; : \;
    \begin{array}{rll}
        x(e) &\geq 0, & \text{ for every $e \in E$ such that } \\
        \; & \; & \text{ $M \setminus e$ is connected,}\vspace{0.2cm} \\
        \sum_{e \in F}x(e) & \leq r(F), &\text{ for every flacet }\emptyset \neq F \subsetneq E
    \end{array}
    \right\}.
   \end{align*}
    In particular, each supporting hyperplane of the above half spaces defines a facet of $B(M)$.
\end{lem}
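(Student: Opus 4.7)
The plan is to establish the two inclusions separately and then verify facet-definingness, working modulo the implicit equality $\sum_{e\in E}x(e)=r(E)$.

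For the forward inclusion and the dimension bound, each vertex of $B(M)$ has the form $\chi_B$ for some basis $B\in\Bf$, so nonnegativity is immediate, and the standard rank-function inequality $|B\cap F|\le r(F)$ gives $\sum_{e\in F}\chi_B(e)\le r(F)$ for every subset $F$, in particular every flacet. Taking $F=E$ yields $\sum_{e\in E}x(e)=r(E)$ throughout $B(M)$, so $\dim B(M)\le |E|-1$; equality when $M$ is connected follows by checking that this is the only linear equality on $B(M)$. Concretely, the symmetric exchange axiom combined with connectedness supplies, for any two elements $e,f\in E$, bases $B,B'$ differing only by swapping $e$ for $f$, so the differences $\chi_B-\chi_{B'}$ span the entire hyperplane. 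For the reverse inclusion, the cleanest route is to recognize the right-hand polytope as the base polytope of the polymatroid associated with the submodular function $r$ and appeal to Edmonds' theorem identifying its vertices with $\{\chi_B : B\in\Bf\}$; the two polytopes then coincide.

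Finally, to identify which inequalities define facets, I would compute the dimension of each face they cut out. The face $\{x\in B(M) : x(e)=0\}$ equals, after discarding the $e$-coordinate, the polytope $B(M\setminus e)$, which has dimension $|E|-2$ precisely when $M\setminus e$ is connected (using the already-established dimension formula inductively). The face $\{x\in B(M) : \sum_{e\in F}x(e)=r(F)\}$ is affinely isomorphic to the product $B(M|F)\times B(M/F)$, whose dimension equals $|E|-2$ iff both $M|F$ and $M/F$ are connected, that is, iff $F$ is a flacet. The main obstacle is verifying minimality --- that distinct flacets produce distinct supporting hyperplanes and that no flacet inequality is implied by the others together with the equality $\sum_{e\in E}x(e)=r(E)$ --- which amounts to recovering $F$ from the affine span of its face and is the technical content of the results of Feichtner--Sturmfels and Kim cited in the statement.
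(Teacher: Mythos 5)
The paper does not prove this lemma --- it is quoted from Feichtner--Sturmfels and Kim --- so there is no internal proof to compare against; judging your sketch on its own terms, the forward inclusion, the dimension computation via symmetric exchange along a common circuit, and the identification of the faces $\{x(e)=0\}\cong B(M\setminus e)$ and $\{\sum_{e\in F}x(e)=r(F)\}\cong B(M|F)\times B(M/F)$ are all sound and are indeed the standard route. The genuine gap is in the reverse inclusion. Edmonds' theorem describes $B(M)$ as the set of $x\ge 0$ with $\sum_{e\in F}x(e)\le r(F)$ for \emph{every} subset $F\subseteq E$ (together with $\sum_{e\in E}x(e)=r(E)$); the polytope on the right-hand side of the lemma is cut out by only a sub-collection of these inequalities and is therefore a priori \emph{larger}. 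Writing $Q$ for the Edmonds polytope and $R$ for the right-hand side, your argument yields $B(M)\subseteq R$ and $Q\subseteq B(M)$, but since $Q\subseteq R$ the chain does not close. "Recognizing the right-hand polytope as the polymatroid base polytope" is exactly the assertion that the omitted inequalities are redundant, which is the substantive content of the cited results: one must check that for non-flat $F$ the inequality follows from that for $\cl(F)$ plus nonnegativity, that for $F$ with $M|F$ disconnected it is the sum of the inequalities for the connected components, and that for $F$ with $M/F$ disconnected it is implied by the inequalities for the complementary pieces together with the global equality $\sum_{e\in E}x(e)=r(E)$ (and similarly $x(e)\ge 0$ for non-deletable $e$ must be derived from the flacet inequalities). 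None of this reduction appears in the proposal.

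Two smaller points. First, once a listed inequality is shown to cut out a face of dimension $\dim B(M)-1$ it is automatically facet-defining, so no separate ``minimality'' verification is needed; what \emph{is} needed for the paper's applications (which count facets to compute the rank) is that distinct flacets and distinct deletable elements yield distinct facets, and this follows from connectedness of $M$ (a coincidence of supporting hyperplanes modulo $\sum_{e\in E}x(e)=r(E)$ would force a decomposition $E=F\sqcup F'$ with $r(E)=r(F)+r(F')$, or a coloop). Second, your criterion ``$M|F$ and $M/F$ both connected iff $F$ is a flacet'' quietly absorbs the flat condition; this is essentially harmless because a non-flat $F$ makes $M/F$ have a loop, hence disconnected when $|E\setminus F|\ge 2$, but the boundary case $|E\setminus F|=1$ should be mentioned.
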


\subsection{Graphs and graphic matroids}\label{subsec:graph}
Next, we recall graphs and their graphic matroids.
Throughout this subsection, let $G$ be a finite undirected (not necessarily simple) graph on the vertex set $V(G)$ with the edge set $E(G)$.

We use the following notation on graph theory:
\begin{itemize}
    \item For a subset $S \subset V(G)$, let $G[S]$ denote the induced subgraph with respect to $S$ and let $G\setminus S:=G[V(G)\setminus S]$ (for a vertex $v\in V(G)$, we denote by $G \setminus v$ instead of $G \setminus \{v\}$).
    \item For a subset $S \subset E(G)$, let $G\setminus S$ (resp. $G/S$) denote the graph obtained from $G$ by removing (resp. contracting) the edges in $S$ (for an edge $e\in E(G)$, we denote by $G \setminus e$ (resp. $G/e$) instead of $G \setminus \{e\}$ (resp. $G/\set{e}$)).
\end{itemize}

We can obtain a matroid from $G$ called the \textit{graphic matroid} (denoted by $M(G)$), which is a matroid on the set $E(G)$ whose independent sets are the forests in $G$. A set $B \subset E(G)$ is a basis of $M(G)$ if and only if $B$ is a spanning forest of $G$.
We can see that the graphic matroid $M(G)$ is connected if and only if $G$ is 2-connected, that is, $G\setminus v$ is connected for any vertex $v\in V(G)$ (in this paper, we consider the complete graph $K_2$ on 2 vertices as a 2-connected graph).

It is well known that $G$ is 2-connected if and only if $G$ has an (open) ear decomposition (see \cite[Proposition~3.1.1]{diestel2017graph}), that is, $G$ can be decomposed as $K_2 \cup P_1 \cup \cdots \cup P_r$, where $P_i$ is a path, and $(K_2 \cup P_1 \cup \cdots \cup P_{i-1}) \cap P_i$ consists of the two end vertices of $P_i$ for each $i$.

If $G$ is a plane graph, we can form another plane graph called the \textit{dual graph} $G^\ast$. The vertices of $G^\ast$ correspond to the faces of $G$, with each vertex being placed in the corresponding face. Every edge $e$ of $G$ gives rise to an edge of $G^\ast$ joining the two faces of $G$ that contain $e$.
If $G$ is a planar graph, then $M(G^\ast) \cong M^\ast(G)$ holds, which implies that $\rank B(M(G^\ast)) = \rank B(M(G))$.

In what follows, we assume that $G$ is 2-connected.
An edge $e\in E(G)$ is said to be \textit{contractible} (resp. \textit{deletable}) if $G/e$ (resp. $G\setminus e$) is also 2-connected. 
It is known that $G\setminus e$ or $G/e$ is 2-connected, that is, $e$ is deletable or contractible (\cite[Theorem~4.3.1]{oxley2011matroid}).
Moreover, we can see that $F\subset E(G)$ is a flacet of $M(G)$ if and only if there exists a 2-connected induced subgraph $H$ of $G$ with $E(H)=F$ such that $G/F$ is 2-connected.
We call such a subgraph $H$ a \textit{flacet} of $G$.
In particular, if an edge $e\in E(G)$ is contractible, then $\set{e}$ is a flacet of $G$.
Therefore, it follows from Lemma~\ref{facets:base} that the deletable edge $e\in E(G)$ (resp. the flacet $H$ of $G$) one-to-one corresponds to the facet defining inequality $x(e)\ge 0$ (resp. $\sum_{e\in E(H)}x(e)\le r(E(H))$).

\bigskip

In the remainder of this section, we define some graphs needed to state our main results.

The \emph{complete multipartite graph} $K_{r_1, ..., r_n}$ ($n > 1$) is the simple graph on the vertex set of the form $\bigsqcup_{i=1}^n V_i$ ($|V_i| = r_i$ for $i = 1, ..., n$) with the edge set
    \begin{equation*}
        E\left(K_{r_1, ..., r_n}\right) := \left\{\{v,v'\} : v \in V_i, v' \in V_j, 1 \leq i < j \leq n\right\}.
    \end{equation*}
When $r_1=\cdots=r_n=1$, we denote $K_{\underbrace{1,\ldots,1}_n}$ by $K_n$, which is nothing but the complete graph on $n$ vertices.

For integers $a$ and $b$ with $a\le b$, let $[a,b]:=\set{c\in \ZZ : a\le c \le b}$.
We define four graphs as follows:
\begin{itemize}
    \item For $s\in \ZZ_{>0}$, let $\Ac_s$ be the graph on $V(\Ac_s):=\set{v_1,v_2}$ with 
    \[
    E(\Ac_s):=\set{e_i:=\set{v_1,v_2} :  i\in [1,s]}.
    \]
    See Figure~\ref{graph_A}.
    \item For $n\in \ZZ_{>0}$, $s_1,\ldots,s_n\in\ZZ_{>0}$ and $p\in \ZZ_{\ge 0}$, let $\Bc_{s_1,\ldots,s_n,p}$ be the graph on 
    \[
    V(\Bc_{s_1,\ldots,s_n,p}):=\set{v_1,\ldots,v_{n+1}}\cup\set{u_1,\ldots,u_p}
    \]
    with 
    \begin{align*}
    E(\Bc_{s_1,\ldots,s_n,p}):=&\rbra{\bigcup_{i\in [1,n]}\set{e_{i,j}:=\set{v_i,v_{i+1}} : j\in [1,s_i+1]}}\cup \\ &\set{\epsilon_i:=\set{u_i,u_{i+1}} : i\in [0,p]},
    \end{align*}
    where we let $u_0=v_1$ and $u_{p+1}=v_{n+1}$. See Figure~\ref{graph_B}.
    \item For $s,q\in \ZZ_{>0}$ and $t,p\in \ZZ_{\ge 0}$, let $\Cc_{s,t,p,q}$ be the graph on 
     \[
     V(\Cc_{s,t,p,q}):=\set{v_1,v_2,v_3}\cup\set{u_1,\ldots,u_p}\cup\set{w_1,\ldots,w_q}
     \]
     with 
    \begin{align*}
    E(\Cc_{s,t,p,q}):=&\set{e_{1,i}:=\set{v_1,v_2} : i\in[1,s+1]}\cup\set{e_{2,i}:=\set{v_2,v_3} : i\in[1,t+1]}\cup \\ &\set{\epsilon_{1,i}:=\set{u_i,u_{i+1}} : i\in [0,p]}\cup\set{\epsilon_{2,i}:=\set{w_i,w_{i+1}} : i\in [0,q]},
    \end{align*}
    where we let $u_0=v_1$, $w_0=v_2$ and $u_{p+1}=w_{q+1}=v_3$.
    See Figure~\ref{graph_C}.
    \item For $s_1,s_2,s_3\in \ZZ_{\ge 0}$, let $\Dc_{s_1,s_2,s_3}$ be the graph on 
     $V(\Dc_{s_1,s_2,s_3}):=\set{v_1,v_2,v_3}$ with 
    \begin{align*}
    E(\Dc_{s_1,s_2,s_3}):=\bigcup_{i=1}^3\set{e_{i,j}:=\set{v_i,v_{i+1}} : j\in[1,s_i]},
    \end{align*}
    where we let $v_4=v_1$. See Figure~\ref{graph_D}.
\end{itemize}

\begin{figure}[H]
    \centering
    \begin{minipage}{0.46\linewidth}
        \centering
        \scalebox{0.8}{
            \begin{tikzpicture}
                \coordinate (N1) at (0,2); 
                \coordinate (N2) at (4,2);
                \draw  (N1) to [out=30,in=150] (N2);
                \draw  (N1) to [out=-30,in=-150] (N2);
                \draw[dotted]  (2,2.3)--(2,1.7);
                \draw [line width=0.05cm, fill=white] (N1) circle [radius=0.15] node[above left] {\Large $v_1$};
                \draw [line width=0.05cm, fill=white] (N2) circle [radius=0.15] node[above right] {\Large $v_2$};
                \node[] at (2,1) {\Large $e_s$};
                \node[] at (2,3) {\Large $e_1$};
                \node[] at (2,0) {$\;$};
                \node[] at (2,4.5) {$\;$};
            \end{tikzpicture}}
        \caption{The graph $\Ac_s$}
        \label{graph_A}
    \end{minipage}
    \begin{minipage}{0.53\linewidth}
        \centering
        \scalebox{0.8}{
        \begin{tikzpicture}
                \coordinate (N1) at (0,2); 
\coordinate (N2) at (2,2); 
\coordinate (N3) at (1,0); 
\coordinate (N4) at (5,0); 
\coordinate (N5) at (6,2);
\coordinate (N6) at (4,2);
\draw  (N1) to [out=60,in=120] (N2);
\draw  (N1) to [out=-60,in=-120] (N2);
\draw  (N6) to [out=60,in=120] (N5);
\draw  (N6) to [out=-60,in=-120] (N5);
\draw  (N1)--(N3);
\draw  (N4)--(N5);
\draw  (N3)--(2,0);
\draw  (N4)--(4,0);
\draw[dotted]  (2.5,0)--(3.5,0);
\draw[dotted]  (1,2.3)--(1,1.7);
\draw[dotted]  (2.5,2)--(3.5,2);
\draw[dotted]  (5,2.3)--(5,1.7);

\draw [line width=0.05cm, fill=white] (N1) circle [radius=0.15] node[above left] {\Large $v_1$};
\draw [line width=0.05cm, fill=white] (N2) circle [radius=0.15] node[] at (2,2.5) {\Large $v_2$};
\draw [line width=0.05cm, fill=white] (N3) circle [radius=0.15] node[above right] {\Large $u_1$};
\draw [line width=0.05cm, fill=white] (N4) circle [radius=0.15] node[above left] {\Large $u_p$};
\draw [line width=0.05cm, fill=white] (N5) circle [radius=0.15] node[above right] {\Large $v_{n+1}$};
\draw [line width=0.05cm, fill=white] (N6) circle [radius=0.15] node[] at (4,2.5) {\Large $v_n$};
\node[] at (1.2,1.2) {\Large $e_{1,s_1+1}$};
\node[] at (1,2.8) {\Large $e_{1,1}$};
\node[] at (4.8,1.2) {\Large $e_{n,s_n+1}$};
\node[] at (5,2.8) {\Large $e_{n,1}$};
\node[below left] at ($(N1)!0.5!(N3)$) {\Large $\epsilon_0$};
\node[below right] at ($(N5)!0.5!(N4)$) {\Large $\epsilon_p$};
\node[] at (0,-0.5) {$\;$};
\node[] at (0,4) {$\;$};
            \end{tikzpicture}}
        \caption{The graph $\Bc_{s_1,\ldots,s_n,p}$}
        \label{graph_B}
    \end{minipage}
\end{figure}

\begin{figure}[H]

    \begin{minipage}{0.49\linewidth}
        \centering
        \scalebox{0.8}{
            \begin{tikzpicture}
\coordinate (N1) at (0,2); 
\coordinate (N2) at (3,2); 
\coordinate (N3) at (1,0); 
\coordinate (N4) at (5,0); 
\coordinate (N5) at (6,2);
\coordinate (N6) at (3.5,4); 
\coordinate (N7) at (5.5,4);

\draw  (N1) to [out=45,in=135] (N2);
\draw  (N1) to [out=-45,in=-135] (N2);
\draw  (N2) to [out=45,in=135] (N5);
\draw  (N2) to [out=-45,in=-135] (N5);
\draw  (N1)--(N3);
\draw  (N4)--(N5);
\draw  (N3)--(2,0);
\draw  (N4)--(4,0);
\draw[dotted]  (2.5,0)--(3.5,0);
\draw[dotted]  (1.5,2.3)--(1.5,1.7);
\draw[dotted]  (4.5,2.3)--(4.5,1.7);
\draw  (N2)--(N6);
\draw  (N7)--(N5);
\draw  (N6)--(4,4);
\draw  (5,4)--(N7);
\draw[dotted]  (4.3,4)--(4.7,4);

\draw [line width=0.05cm, fill=white] (N1) circle [radius=0.15] node[above left] {\Large $v_1$};
\draw [line width=0.05cm, fill=white] (N2) circle [radius=0.15] node[] at (3,1.5) {\Large $v_2$};
\draw [line width=0.05cm, fill=white] (N3) circle [radius=0.15] node[above right] {\Large $u_1$};
\draw [line width=0.05cm, fill=white] (N4) circle [radius=0.15] node[above left] {\Large $u_p$};
\draw [line width=0.05cm, fill=white] (N5) circle [radius=0.15] node[above right] {\Large $v_3$};
\draw [line width=0.05cm, fill=white] (N6) circle [radius=0.15] node[above left] {\Large $w_1$};
\draw [line width=0.05cm, fill=white] (N7) circle [radius=0.15] node[above right] {\Large $w_q$};
\node[] at (1.5,1.1) {\Large $e_{1,s+1}$};
\node[] at (1.5,2.9) {\Large $e_{1,1}$};
\node[] at (4.5,1.1) {\Large $e_{2,t+1}$};
\node[] at (4.5,2.9) {\Large $e_{2,1}$};
\node[below left] at ($(N1)!0.5!(N3)$) {\Large $\epsilon_{1,0}$};
\node[below right] at ($(N5)!0.5!(N4)$) {\Large $\epsilon_{1,p}$};
\node[above left] at ($(N2)!0.5!(N6)$) {\Large $\epsilon_{2,0}$};
\node[above right] at ($(N5)!0.5!(N7)$) {\Large $\epsilon_{2,q}$};

            \end{tikzpicture}}
        \caption{The graph $\Cc_{s,t,p,q}$}
        \label{graph_C}
    \end{minipage}
    \begin{minipage}{0.5\linewidth}
        \centering
        \scalebox{0.8}{
            \begin{tikzpicture}
\coordinate (N1) at (0,2); 
\coordinate (N2) at (4,2); 
\coordinate (N3) at (2,5.464);
\draw  (N1) to [out=15,in=165] (N2);
\draw  (N1) to [out=-15,in=-165] (N2);
\draw  (N2) to [out=105,in=-45] (N3); 
\draw  (N2) to [out=135,in=-75] (N3);
\draw  (N1) to [out=45,in=-105] (N3); 
\draw  (N1) to [out=75,in=-135] (N3);
\draw[dotted]  (2,2.2)--(2,1.8);
\draw[dotted]  (3.166,3.85)--(2.834,3.672);
\draw[dotted]  (1.166,3.672)--(0.834,3.85);
\node[] at (2,1.4) {\Large $e_{1,s_1}$};
\node[] at (2,2.5) {\Large $e_{1,1}$};
\node[] at (2.4,3.4) {\Large $e_{2,1}$};
\node[] at (3.9,3.9) {\Large $e_{2,s_2}$};
\node[] at (0.1,4) {\Large $e_{3,s_3}$};
\node[] at (1.6,3.4) {\Large $e_{3,1}$};

\draw [line width=0.05cm, fill=white] (N1) circle [radius=0.15] node[below left] {\Large $v_1$};
\draw [line width=0.05cm, fill=white] (N2) circle [radius=0.15] node[below right] {\Large $v_2$};
\draw [line width=0.05cm, fill=white] (N3) circle [radius=0.15] node[] at (2,5.964) {\Large $v_3$};
            \end{tikzpicture}}
        \caption{The graph $\Dc_{s_1,s_2,s_3}$}
        \label{graph_D}
    \end{minipage}
\end{figure}

\section{Classification of matroid polytopes with small rank}\label{sec:main}
In this section, we classify independence polytopes and base polytopes with rank at most 3, respectively.

\subsection{Independence polytopes with small rank}\label{sec:main1}
Let ${\bf MI}_n$ denote the sets of unimodular equivalence classes of matroid independence polytopes with rank $n$.
Let $M$ be a connected matroid on a ground set $E$. 
We set $\Fc_M := \{F \subset E : F \text{ is a flat of $M$ with } r(F) = 1\}$.
This is called a \textit{parallel class} and is a generalization of multiedges in graphs to the case of matroids.
The following lemma establishes some fundamental properties of $\Fc_M$:
\begin{lem}\label{lem:parallelclass}
    Let $M$ be a connected matroid (especially, $M$ has no loops). Then we have the following:
    \begin{enumerate}[\normalfont(1)]
        \item For any $F, F' \in \Fc_M$, if $F \cap F' \neq \emptyset$ then $F = F'$.
        \item $\bigsqcup_{F \in \Fc_M} F = E$.
        \item One has $|\Fc_M| \geq 1$. Moreover, the following are equivalent:
        \begin{itemize}
            \item $|\Fc_M| = 1$;
            \item $\Fc_M = \{E\}$;
            \item $M= U_{1,|E|}$ where $U_{r,n}$ denotes the uniform matroid on $n$ with rank $r$.
        \end{itemize}
    \end{enumerate}
\end{lem}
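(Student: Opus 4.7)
The plan is to reduce everything to the observation that, in a loopless matroid, every rank-1 flat coincides with the closure of any one of its elements. Since $M$ is connected we may assume $|E|\ge 1$ and $M$ has no loops, so for every $e\in E$ we have $r(\{e\})=1$, and hence
\[
r(\cl(\{e\}))=r(\{e\})=1,
\]
so $\cl(\{e\})\in\Fc_M$.

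For (1), suppose $F,F'\in\Fc_M$ share an element $e$. Both $F$ and $F'$ are flats containing $e$, so each contains $\cl(\{e\})$; but $\cl(\{e\})$ and $F$ both have rank $1$ with $\cl(\{e\})\subset F$, which forces $F=\cl(\{e\})$, and similarly $F'=\cl(\{e\})$. Hence $F=F'$.

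For (2), disjointness of the union in $\bigsqcup_{F\in\Fc_M}F$ is exactly (1). Surjectivity onto $E$ follows from the previous paragraph: any $e\in E$ lies in the rank-1 flat $\cl(\{e\})\in\Fc_M$.

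For (3), the inequality $|\Fc_M|\ge 1$ is immediate from (2) together with $E\neq\emptyset$. The equivalence $|\Fc_M|=1\Leftrightarrow \Fc_M=\{E\}$ is again a direct consequence of (2): the unique member of $\Fc_M$ must cover all of $E$. Finally, if $\Fc_M=\{E\}$ then $E$ itself is a flat of rank $1$, so $r(M)=1$; a loopless matroid of rank $1$ on $|E|$ elements is exactly $U_{1,|E|}$. Conversely, in $U_{1,|E|}$ every nonempty subset has rank $1$ and $E$ is the unique rank-1 flat, giving $\Fc_M=\{E\}$.

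There is no real obstacle here; the only point requiring care is invoking the standing hypothesis (connected $\Rightarrow$ loopless when $|E|\ge 2$, and the one-element case being trivial) so that the identity $r(\cl(\{e\}))=1$ holds for every $e\in E$, which drives the whole argument.
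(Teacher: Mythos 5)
Your proof is correct and follows essentially the same route as the paper's: part (2) and (3) are argued identically via $\cl(\{e\})\in\Fc_M$ and $r(E)=1$, and your part (1) (any rank-$1$ flat containing $e$ must equal $\cl(\{e\})$) is just a minor repackaging of the paper's intersection-of-flats argument. No gaps.
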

\begin{proof}
    \begin{enumerate}[\normalfont(1)]
        \item Suppose that $F \cap F' \neq \emptyset$. 
        Note that the intersection of flats is also a flat.
        Thus, $F \cap F'$ is a flat of $M$ with $r(F \cap F') = 1$ since $F \cap F' \neq \emptyset$ and $F \cap F' \subset F$.
        If $F\neq F'$, then there exists an element $e \in (F\cup F') \setminus (F\cap F')$ and we may assume that $e\in F$. Then we have $r((F \cap F') \cup \{e\}) = 1$, which contradicts the fact that $F \cap F'$ is a flat. Therefore, we get $F = F'$.
        \item The inclusion $\bigcup_{F \in \Fc_M} F \subset E$ is trivial, so we prove that $\bigcup_{F \in \Fc_M} F \supset E$. 
        For any $x \in E$, we have $r(\cl(x))=r(x)=1$ since $M$ has no loops.
        Thus, $\cl(x)$ is a flat of $M$ with $r(\cl(x))=1$, which implies that $\cl(x)\in \Fc_M$.
        Therefore, we have $\bigcup_{F \in \Fc_M} F \supset \cl(x)\ni x$, and hence $\bigcup_{F \in \Fc_M} F \supset E$.
        From (1), we obtain $\bigsqcup_{F \in \Fc_M} F = E$.
        \item From (2), we have $\bigsqcup_{F \in \Fc_M} F = E$, which implies that $|\mathcal{F}| \geq 1$, and $|\Fc_M| = 1$ if and only if $\Fc_M = \{E\}$. If $M=U_{1,|E|}$, then $\Fc_M=\{E\}$ since $U_{1,|E|}$ has only one flat $E$ and satisfies $r(E) = 1$. Conversely, assume that $\Fc_M = \{E\}$, we have $r(E) = 1$. Since $M$ is connected, it follows that $M = U_{1,|E|}$. 
    \end{enumerate}
\end{proof}

The above lemma helps us to show the following first main theorem:

\begin{thm}\label{thm: classifyIM}
    Let $M$ be a connected matroid. Then we have the following:
    \begin{enumerate}[\normalfont(1)]
        \item $P(M) \in {\bf MI}_0$ if and only if $M$ is the uniform matroid $U_{1,|E|}$.
        \item $P(M)\notin {\bf MI}_1 \cup {\bf MI}_2$.
        \item $P(M) \in {\bf MI}_3$ if and only if $M$ is the graphic matroid $M(\Dc_{s_1, s_2, s_3})$ for some $s_1, s_2, s_3 \in \ZZ_{>0}$.
    \end{enumerate}
\end{thm}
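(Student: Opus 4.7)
My plan is to use Lemma~\ref{facets:independent} to translate the question into a combinatorial one about indecomposable flats. Since a connected matroid with $|E|\ge 2$ has no loops (and the case $|E|=1$ reduces directly to $U_{1,1}=M(\Ac_1)$), the lemma yields $\dim P(M)=|E|$ and shows that the number of facets of $P(M)$ equals $|E|+\#\Ic(M)$, where $\Ic(M)$ denotes the set of non-empty indecomposable flats of $M$. Hence
\[
\rank P(M)=\#\Ic(M)-1,
\]
and the problem becomes one of counting $\Ic(M)$. Two families of indecomposable flats are automatic: every parallel class $F\in\Fc_M$ (any non-trivial split of $F$ would force a rank-$0$ part, hence an empty one, since $M$ is loopless), and the ground set $E$ itself (by the connectedness of $M$).

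I would then run a case analysis on $|\Fc_M|$. The case $|\Fc_M|=1$ is exactly $M=U_{1,|E|}$ by Lemma~\ref{lem:parallelclass}(3); here $\Fc_M=\{E\}$, so $\Ic(M)=\{E\}$ and $\rank P(M)=0$, proving (1). The case $|\Fc_M|=2$ is incompatible with connectedness: picking one element from each parallel class gives an independent pair, so $r(E)\ge 2$, which combined with the submodular bound $r(E)\le r(F_1)+r(F_2)=2$ yields $r(E)=r(F_1)+r(F_2)$, contradicting the indecomposability of $E$. For $|\Fc_M|\ge 3$, two non-parallel elements force $r(E)\ge 2$, so $E\notin\Fc_M$ and $\#\Ic(M)\ge|\Fc_M|+1\ge 4$, giving $\rank P(M)\ge 3$. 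This proves (2).

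For (3), the preceding lower bounds show that $\rank P(M)=3$ forces $|\Fc_M|=3$ and $\Ic(M)=\{F_1,F_2,F_3,E\}$. To verify exactness, I first confirm $r(E)=2$: submodularity gives $r(F_i\cup F_j)\le 2$, and the existence of two non-parallel elements in $F_i\cup F_j$ forces equality; if $r(E)=3$, then $E=(F_1\cup F_2)\sqcup F_3$ is a two-part decomposition with $r(E)=r(F_1\cup F_2)+r(F_3)$, again contradicting the indecomposability of $E$. Once $r(E)=2$, any proper non-empty flat $F$ has $r(F)\le 1$ (a rank-$2$ proper flat would satisfy $\cl(F)=E\ne F$), hence is a parallel class, so the count is indeed sharp. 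Finally, setting $s_i:=|F_i|$, both $M$ and $M(\Dc_{s_1,s_2,s_3})$ are rank-$2$ loopless matroids with three parallel classes of sizes $s_1,s_2,s_3$, and in either case the bases are exactly the pairs of elements drawn from two distinct parallel classes. Matching parallel classes yields an isomorphism $M\cong M(\Dc_{s_1,s_2,s_3})$; the converse direction is obtained by running the same count on $M(\Dc_{s_1,s_2,s_3})$.

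The main obstacle I expect is the exactness step in (3): showing that no additional indecomposable flats appear requires establishing $r(E)=2$, which in turn depends on extracting the right two-part decomposition of $E$ (namely $(F_1\cup F_2)\sqcup F_3$) in order to invoke the indecomposability of $E$. Once the flat structure is pinned down, reconstructing $M$ as the graphic matroid of $\Dc_{s_1,s_2,s_3}$ reduces to matching the invariants, namely rank and the multiset of parallel class sizes.
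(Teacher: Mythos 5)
Your proposal is correct and follows essentially the same route as the paper: both reduce the statement via Lemma~\ref{facets:independent} to counting the nonempty indecomposable flats, identify these as the parallel classes of $\Fc_M$ together with $E$ itself, and then case on $|\Fc_M|$ to show the rank is $0$ or at least $3$, forcing $|\Fc_M|=3$ and $r(M)=2$ in the rank-$3$ case. The only divergences are local: you exclude $|\Fc_M|=2$ by playing subadditivity of the rank function against the indecomposability of $E$, whereas the paper notes that $\sum_{e\in E}x(e)\le r(E)$ would then be a redundant inequality, contradicting Lemma~\ref{facets:independent}; and you identify $M$ with $M(\Dc_{s_1,s_2,s_3})$ by matching the bases of two rank-$2$ matroids with the same parallel-class sizes, rather than via the paper's circuit-exchange argument --- both variants are sound.
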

\begin{proof}
    By Lemma~\ref{facets:independent}, we can see that $P(M) \in {\bf MI}_n$ if and only if the matroid $M$ has exactly $n+1$ indecomposable flats.
    Moreover, each element $F\in \Fc_M$ is an indecomposable flat, so we have $|\Fc_M|\le n+1$.
    Furthermore, since $E$ is also an indecomposable flat of $M$, we have $2\le |\Fc_M|\le n$ if $M\neq U_{1,|E|}$.
    \begin{enumerate}[\normalfont(1)]
        \item From Lemma~\ref{lem:parallelclass} (2) and the assumption $P(M) \in {\bf MI}_0$, we have $|\Fc_M| = 1$. Therefore, by Lemma~\ref{lem:parallelclass} (2), we have that $P(M) \in {\bf MI}_0$ if and only if $M = U_{1,|E|}$.

        \item Suppose that $P(M) \in {\bf MI}_1$. 
        Then we have $M\neq U_{1,|E|}$ and thus $2 \leq |\Fc_M| \le n= 1$, which is impossible.
        Therefore, we have $P(M) \notin {\bf MI}_1$.

        Suppose that $P(M) \in {\bf MI}_2$. 
        As in (1), we have $2 \le |\Fc_M|\le n=2$, i.e., $|\Fc_M|=2$. Let, say, $\Fc_M=\set{F_1,F_2}$.
        From Lemma~\ref{facets:independent} and Lemma~\ref{lem:parallelclass} (2), we have $\sum_{e\in F_1}x(e) \leq 1$ and $\sum_{e\in F_2}x(e) \leq 1$
        By summing these inequalities, we obtain $\sum_{e\in E}x(e) \leq 2$. However, since $r(M) \geq 2$, this contradicts the fact that the inequality $\sum_{e\in E}x(e) \leq r(E)$ defines a facet of $P(M)$. Therefore, we have $P(M) \notin {\bf MI}_2$.
        
        \item Suppose that $P(M) \in {\bf MI}_3$ then we have $2\le |\Fc_M|\le 3$.
        If $|\Fc_M| = 2$, or $|\Fc_M|=3$ and $r(M)\ge 3$, then, for similar reasons as in (2), $\sum_{e\in E}x(e) \leq r(E)$ does not define a facet of $P(M)$. 
        Assume that $|\Fc_M| = 3$ and $r(M)=2$.
        Let, say, $\Fc_M=\{F_1, F_2, F_3\}$. 
        We will prove that $\{a, b\} \subset E$ is a base of $M$ if and only if $\{a, b\} \not\subset F_i$ for all $i = 1,2,3$. Since $r(F_i) = 1$ for $i = 1,2,3$, any two-element subset of $F_i$ cannot be a base. Therefore, if $\{a, b\}$ is a base, it follows that $\{a, b\} \not\subset F_i$ for $i = 1,2,3$. Now, assume that $a \in F_1, b \in F_2$, and $\{a, b\}$ is not a base of $M$. Since $r(M) = 2$ and $\{a, b\}$ is not a base of $M$, we have $\{a,b\}$ is a circuit of $M$. Moreover, if $\{x,y\} \subset F_1$, then $\{x,y\}$ is also a circuit of $M$. By the properties of circuits, $\{x, b\}$ must also be a circuit of $M$. Since every two-element subset of $F_1 \cup \{b\}$ is a circuit, we have $r(F_1 \cup \{b\}) = 1$, which contradicts the fact that $F_1$ is a flat of $M$.
        Therefore, we have $M=M(\Dc_{s_1,s_2,s_3})$ where $s_1 = |F_1|$, $s_2 = |F_2|$ and $s_3 = |F_3|$.
    \end{enumerate}
\end{proof}

We can see that the graphic matroid $M(\Ac_s)$ coincides with $U_{1,s}$.
Thus, from the above theorem, we immediately obtain the following corollary:
\begin{cor}\label{cor:independence}
    Let $P$ be the independence polytope of a matroid with $\rank P \le 3$.
    Then there exists a graph $G$ with $P\cong P(M(G))$. 
\end{cor}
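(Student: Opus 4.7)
The statement follows essentially immediately from Theorem~\ref{thm: classifyIM} once one reduces to the connected case. The plan is to combine the product decomposition of independence polytopes under direct sums of matroids with the rank formula $\rank(P\times Q)=\rank P+\rank Q+1$ recorded in Section~\ref{sec:pre}, and then read off the possible components using the classification in the theorem.

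First I would eliminate loops: if $e$ is a loop of $M$, then no independent set contains $e$, so every vertex $\chi_I$ of $P(M)$ satisfies $\chi_I(e)=0$, whence $P(M)$ is unimodularly equivalent to $P(M\setminus e)$. We may therefore assume $M$ is loop-free. Next, decompose $M=M_1\oplus\cdots\oplus M_k$ into its connected components, so that
\[
P(M)=P(M_1)\times\cdots\times P(M_k)\qquad\text{and}\qquad \rank P(M)=\sum_{i=1}^k\rank P(M_i)+(k-1).
\]

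By Theorem~\ref{thm: classifyIM}(2), no connected matroid has independence polytope of rank $1$ or $2$, so each $\rank P(M_i)$ lies in $\{0\}\cup\ZZ_{\ge 3}$. The hypothesis $\rank P(M)\le 3$ then leaves only two possibilities: either every component satisfies $\rank P(M_i)=0$ (with $k\le 4$), in which case each $M_i\cong M(\Ac_{s_i})$ by part (1) of the theorem; or $k=1$ and $M\cong M(\Dc_{s_1,s_2,s_3})$ by part (3). In either case $M$ is the graphic matroid of the disjoint union of the corresponding graphs $\Ac_{s_i}$ (respectively, of a single copy of $\Dc_{s_1,s_2,s_3}$), which is the desired graph $G$. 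There is no substantive obstacle here beyond invoking the rank gap supplied by Theorem~\ref{thm: classifyIM}(2), which prevents any additional configurations from arising.
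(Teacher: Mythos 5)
Your proof is correct and follows essentially the same route as the paper: the paper derives the corollary directly from Theorem~\ref{thm: classifyIM} together with the identification $M(\Ac_s)=U_{1,s}$, relying on the reduction to connected components via $P(M)=P(M_1)\times\cdots\times P(M_k)$ and $\rank(P\times Q)=\rank P+\rank Q+1$ already recorded in Section~\ref{sec:pre}. You merely make that reduction explicit (loop removal, the rank gap $\{0\}\cup\ZZ_{\ge 3}$ for components, hence either $k\le 4$ copies of $\Ac_{s_i}$ or a single $\Dc_{s_1,s_2,s_3}$), which is exactly what the paper's ``immediately obtain'' suppresses.
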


\begin{rem}\label{rem:false}
    The above corollary is false when $\rank P \ge 4$.
    In fact, letting $M:=U_{2,n}$ with $n\ge 4$, then we can see that $\rank P(M)=n$.
    If there exists a graph $G$ with $P(M(G))\cong P(M)$, then $G$ is a 2-connected graph on 3 vertices since $M$ is a connected matroid with rank 2.
    Thus, we have $G=\Dc_{s_1,s_2,s_3}$ for some $s_1,s_2,s_3\in \ZZ_{>0}$.
    However, it follows from Theorem~\ref{thm: classifyIM} that $\rank P(M(G))=3\neq \rank P(M)$, a contradiction.
\end{rem}

\subsection{Base polytopes with small rank}\label{sec:main2}

Let ${\bf GMB}_n$ denote the sets of unimodular equivalence classes of matroid base polytopes of graphic matroids with rank $n$.

We classify the base polytopes of graphic matroids with small rank by using the operation of adding paths to a cycle. 
The following lemma shows that the classification of the base polytope of a graphic matroid in terms of rank can be reduced to the case of simple graphs:

\begin{lem}\label{lem:adding path}
    Let $G$ be a 2-connected graph and $G'$ be the graph obtained by adding one path $P$ to $G$ with $V(G)\cap V(P)=\{u,v\}$.
    Then we have $\rank B(M(G)) \leq \rank B(M(G'))$.
    In particular, when $\{u,v\}$ is an edge of $G$ and $P$ has length 1, we have $\rank B(M(G')) = \rank B(M(G))$ if $\{u,v\}$ is deletable in $G$, otherwise $\rank B(M(G')) = \rank B(M(G)) + 1$.
\end{lem}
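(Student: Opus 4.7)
The plan is to compare the facet counts of $B(M(G))$ and $B(M(G'))$ via Lemma~\ref{facets:base}. Let $\ell$ denote the length of $P$, with edges $f_1,\ldots,f_\ell$ and internal vertices $w_1,\ldots,w_{\ell-1}$ (set $w_0=u$ and $w_\ell=v$). Since $\dim B(M(G')) - \dim B(M(G)) = |E(G')|-|E(G)| = \ell$, the inequality $\rank B(M(G)) \le \rank B(M(G'))$ is equivalent to showing that $B(M(G'))$ has at least $\ell$ more facets than $B(M(G))$.

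First I would establish two preservation statements, using the standard fact that attaching an ear (a path with new internal vertices) to a 2-connected graph yields a 2-connected graph. Every deletable edge $e$ of $G$ remains deletable in $G'$, because $G'\setminus e = (G\setminus e)\cup P$ is the 2-connected graph $G\setminus e$ with the ear $P$ attached. Every flacet $H$ of $G$ produces a flacet of $G'$ as follows: if $V(H)\not\supset\{u,v\}$, the same vertex set works because $G'[V(H)] = G[V(H)]$ and $G'/E(H)$ is obtained from the 2-connected $G/E(H)$ by attaching $P$ as an ear; if $V(H)\supset\{u,v\}$, I enlarge to $V(H'):=V(H)\cup\{w_1,\ldots,w_{\ell-1}\}$, so that $G'[V(H')]=H\cup P$ is 2-connected by ear addition and $G'/E(H')=G/E(H)$ as graphs, because the path edges get absorbed into the same contracted super-vertex.

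Next I produce $\ell$ additional facets of $B(M(G'))$ not coming from the above correspondence. For $\ell\ge 2$, each path edge $f_i=\{w_{i-1},w_i\}$ is contractible in $G'$, because $G'/f_i$ is $G$ with a shorter ear of length $\ell-1\ge 1$ still attached; hence $\{f_i\}$ is a single-edge flacet of $G'$. These $\ell$ flacet vertex sets have size $2$ and contain an internal path vertex, so they cannot coincide with the images from the previous step (whose vertex sets either lie in $V(G)$, or contain all of $V(P)$ and thus have size at least $\ell+1$). For $\ell=1$, the new edge $f_1$ itself is deletable in $G'$ since $G'\setminus f_1 = G$, yielding one new facet. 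Summing up gives the general inequality.

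For the ``in particular'' clause, assume $\ell=1$ and $\{u,v\}\in E(G)$, so $f_1$ is parallel to existing $u$-$v$ edges of $G$. Since parallel edges affect neither the 2-connectedness of induced subgraphs nor that of quotients, the flacet correspondence from the second paragraph becomes a bijection, so $G$ and $G'$ have the same number of flacets. On the other hand, every $u$-$v$ edge is deletable in $G'$ because removing it leaves a parallel copy together with the rest of $G$; hence the number of deletable edges increases by exactly $1$ if $\{u,v\}$ was already deletable in $G$, and by exactly $2$ otherwise. Combined with $|E(G')|=|E(G)|+1$, this gives the stated rank equalities. The main obstacle throughout is the case $V(H)\supset\{u,v\}$ in the flacet correspondence: without the enlargement to $V(H')$, the path $P$ would appear as a dangling handle at the contracted super-vertex in $G'/E(H)$ when $\ell\ge 2$ and destroy 2-connectedness; the enlargement combined with the ear-addition principle is what saves the argument.
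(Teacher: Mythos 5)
Your proof is correct and follows essentially the same route as the paper: both arguments compare facet counts via the deletable-edge/flacet description of $B(M(G))$, split on whether $\{u,v\}\subset V(H)$ to transport flacets from $G$ to $G'$ (enlarging by $P$ in the second case), and observe that each new path edge contributes a facet because it is contractible (for $\ell\ge 2$) or deletable (for $\ell=1$). Your treatment of the ``in particular'' clause via an explicit bijection on flacets together with an exact count of deletable edges is a somewhat more transparent bookkeeping than the paper's two-sided inequality argument, but it is the same underlying idea.
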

\begin{proof}
    It is clear that a deletable edge in $G$ is also deletable in $G'$.
    Let $H$ be a flacet of $G$.
    If $\{u,v\} \not\subset V(H)$, then $(G/E(H))\cup P$ is 2-connected, which implies that $H$ is a flacet of $G'$.
    On the other hand, if $\{u,v\}\subset V(H)$, then $H':=H \cup P$ is 2-connected and $G'/H'=G/H$, which implies that $H'$ is a flacet of $G'$. Therefore, since the number of flacets of $G'$ is at least that of $G$ and every edge of $P$ is contractible or deletable, we obtain $\rank B(M(G)) \leq \rank B(M(G'))$. 
    
    Assume that $P$ has length 1 and let $H''$ be a flacet of $G'$. 
    It is clear that a deletable edge in $G'$ except for $P$ and $\{u,v\}$ is also deletable in $G$.
    We assume that $\{u,v\}$ is deletable in $G$. If $\{u,v\} \not\subset V(H'')$, then $G/E(H'')$ is 2-connected, which implies that $H''$ is a flacet of $G$. On the other hand, if $\{u,v\}\subset V(H'')$, then $G/(E(H'')\setminus P)$ is 2-connected, which implies that $H'' \setminus P$ is a flacet of $G$. Therefore, we obtain $\rank B(M(G')) \leq \rank B(M(G))$. Hence, we have $\rank B(M(G'))  = \rank B(M(G))$.
    
    We assume that $\{u,v\}$ is not deletable. Since $P$ is a loop in $G'/\{u,v\}$, $\{u,v\}$ is not contractible in $G'$. Similarly, $P$ is also not contractible in $G'$. Moreover, since $\{u,v\}$ is contractible in $G$, $\{u,v\} \cup P$ is a flacet in $G'$. Therefore, we obtain $\rank B(M(G)) + 1 \leq \rank B(M(G'))$. Using a similar argument, we obtain $\rank B(M(G')) - 1 \leq \rank B(M(G))$. Hence, we have $\rank B(M(G')) = \rank B(M(G)) + 1$ .
\end{proof}

\begin{thm}\label{thm:smallsimplebasepolytope}
    Let $G$ be a 2-connected simple graph. Then we have the following:
    \begin{enumerate}[\normalfont(1)]
        \item $B(M(G))\in {\bf GMB}_0$ if and only if $G=\Ac_{s+2}^\ast$ for some $s\in \ZZ_{>0}$.
        \item $B(M(G))\not\in {\bf GMB}_1$.
        \item $B(M(G))\in {\bf GMB}_2$ if and only if $G=\Dc_{s_1+1,s_2+1,1}^\ast$ for some $s_1,s_2\in \ZZ_{>0}$.
        \item $B(M(G))\in {\bf GMB}_3$ if and only if $G$ coincides with one of the following graphs:
        \begin{itemize}
        \item[(i)] $\Bc_{s_1,s_2,s_3,1}^\ast$ for some $s_1,s_2,s_3\in \ZZ_{>0}$;
        \item[(ii)] $\Dc_{s_1+1,s_2+1,s_3+1}^\ast$ for some $s_1,s_2,s_3\in \ZZ_{> 0}$.
        \end{itemize}
    \end{enumerate}
\end{thm}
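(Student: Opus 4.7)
The strategy is to use the planar duality $B(M(G)) \cong B(M(G^*))$ recorded in Section \ref{sec:pre}, which preserves the rank, to transfer the classification in Theorem \ref{mainthm2} to simple $2$-connected graphs. For the ``if'' direction, each candidate graph is by construction the planar dual $H^*$ of a specific multigraph $H$ listed in Theorem \ref{mainthm2}, so invoking that theorem yields the stated rank via the duality. One then checks separately that each such $H^*$ is actually simple, which reduces to verifying that $H$ contains no degree-$2$ vertex (whose two incident edges would become parallel edges in $H^*$) and no pair of edges sharing both of their incident faces.

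For the converse, let $G$ be a simple $2$-connected graph with $\rank B(M(G)) \leq 3$. The first task is to establish that $G$ is planar. Granted planarity, $G^*$ is a $2$-connected multigraph with $\rank B(M(G^*)) \leq 3$, so Theorem \ref{mainthm2} places it in one of the listed families. The remaining task is to determine which multigraphs in that list admit a simple planar dual; this is exactly the ``no degree-$2$ vertex, no two edges sharing both incident faces'' condition from the previous paragraph, and the surviving cases are precisely those in the statement. In particular, the rank-$1$ family $\Bc_{s,p}$ with $s, p > 0$ always contains the degree-$2$ vertex $u_1$, so there is no rank-$1$ simple $2$-connected graph, matching part (2).

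The main obstacle is establishing planarity. My plan is to apply Kuratowski's theorem: first compute directly that $\rank B(M(K_5))$ and $\rank B(M(K_{3,3}))$ are both at least $4$ by counting facets via Lemma \ref{facets:base}, then observe that subdividing an edge preserves the rank (a short argument showing that a subdivision adds one edge and one vertex, increasing $\dim$ and the facet count each by $1$). Since ear attachment does not decrease rank by Lemma \ref{lem:adding path}, any $2$-connected simple graph containing a subdivision of $K_5$ or $K_{3,3}$ as a subgraph inherits rank at least $4$ by building it up from that subdivision via an ear decomposition. Hence any simple $2$-connected $G$ with $\rank B(M(G)) \leq 3$ contains no such subdivision, and is therefore planar by Kuratowski's theorem. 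The remaining parameter-matching across the four parts of the theorem is then routine bookkeeping.
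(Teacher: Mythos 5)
Your proposal has a fatal structural problem: it is circular within the paper's logic. You derive the simple-graph classification from Theorem~\ref{mainthm2}, but Theorem~\ref{mainthm2} is Theorem~\ref{thm:smallmultibasepolytope}, whose proof in the paper is itself deduced from the very statement you are asked to prove (each case of that proof begins ``from Theorem~\ref{thm:smallsimplebasepolytope}\dots''). So you cannot invoke the multigraph classification here unless you first supply an independent proof of it, which you do not. The paper's actual argument goes in the opposite direction: it takes an ear decomposition of $G$, notes that $\rank B(M(C_n))=0$, and uses Lemma~\ref{lem:adding path} (adding an ear never decreases the rank) to show that after two or three ear additions the rank already exceeds $3$ except in a short explicit list of graphs $G_1,\dots,G_9$, whose ranks are computed via their manifestly planar duals. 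Planarity is therefore never an issue and Kuratowski is not needed. A secondary point: even granting Theorem~\ref{mainthm2}, it classifies base polytopes only up to unimodular equivalence, whereas Theorem~\ref{thm:smallsimplebasepolytope} pins down the graph $G$ itself; passing through the dual does not by itself recover $G^\ast$ as a graph from the equivalence class of its polytope.

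There is also a concrete error in your planarity step. The claim that subdividing an edge preserves the rank is false in general: subdivision is the matroid-dual of adding a parallel edge, so by Lemma~\ref{lem:adding path} it preserves the rank only when the subdivided edge is contractible, and otherwise increases it by $1$. For example, subdividing the edge $\{v_1,v_2\}$ of the graph $G_1$ with $\{v_1,v_2\}\in E(G_1)$ raises the rank from $2$ to $3$. Your sketch (``increasing $\dim$ and the facet count each by $1$'') is therefore not a correct justification. For your intended application this is repairable, since all you need is that subdivision never decreases the rank (so subdivisions of $K_5$ and $K_{3,3}$ still have rank at least $4$), but as written the step would fail. Even with that repair, the circularity described above remains and the proposal does not constitute a proof.
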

\begin{proof}
    Since $G$ is 2-connected, $G$ has an ear decomposition.
    If $G=K_2$, then $\dim B(M(G))=0$, so we can assume that $G$ is the graph obtained by adding paths to a cycle graph.
    If $G$ is a cycle graph, then $B(M(G))$ is a simplex, and hence $\rank B(M(G))=0$.
    
    We consider a graph $G_1$ obtained by adding one path to a cycle graph (see Figure~\ref{bicyclic}). 
    \begin{figure}[H]
        \begin{center}
            \begin{tikzpicture} [scale=0.8]
            \coordinate (v1) at (6,-2) node at (v1) {$G_1$};
            \draw (6,0) circle (1); 
            \draw (5,0)--(7,0);
            \node[draw, circle, fill=white, inner sep=1.5pt] at (5, 0) {};
            \node[draw, circle, fill=white, inner sep=1.5pt] at (7, 0) {};
            \node[] at (4.6,0) {$v_1$};
            \node[] at (7.4,0) {$v_2$};
            \end{tikzpicture}
        \end{center}
        \caption{The graph $G_1$}
        \label{bicyclic}
    \end{figure}
    Let $v_1$ and $v_2$ be the vertices of $G_1$ with $\deg(v_1)=\deg(v_2)=3$.
    The dual graph of $G_1$ is a multigraph obtained by replacing the edges of $C_3$ with multiple edges, where $C_n$ denotes the cycle graph of length $n$.
    If $\{v_1,v_2\}\in E(G_1)$, then the dual graph $G_1^\ast = \Dc_{s_1+1,s_2+1,1}$ for some $s_1,s_2\in \ZZ_{>0}$. Therefore, we have $\rank B(M(G_1)) = 2$ by Lemma~\ref{lem:adding path}. On the other hand, if $\{v_1,v_2\}\notin E(G_1)$, then the dual graph $G_1^\ast = \Dc_{s_1+1,s_2+1,s_3+1}$ for some $s_1,s_2.s_3 \in \ZZ_{>0}$. Hence, we have $\rank B(M(G_1)) = 3$ by Lemma~\ref{lem:adding path}.    
    
    Next, we consider a graph obtained by adding one path to $G_1$. There are four ways to add a path, resulting in the graph $G_2, G_3, G_4$ and $G_5$, as shown in Figure~\ref{tricyclic} (these graphs are depicted in \cite{voblyi2017enumeration}).
    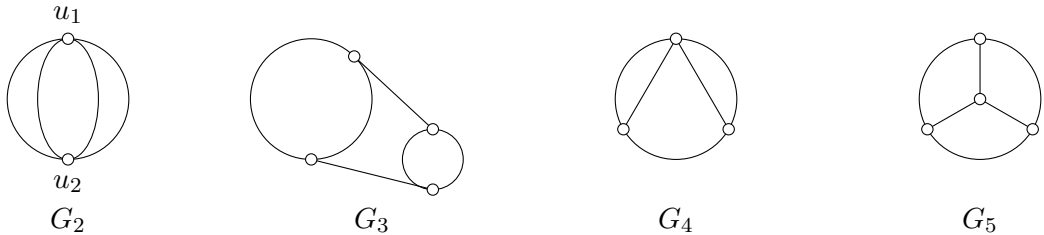
\begin{figure}[H]
        \begin{center}
            \begin{tikzpicture} [scale=0.8]
            \coordinate (v3) at (-10,-6) node at (v3) {$G_2$};
            \draw (-10,-4) circle (1);
            \draw (-10,-4) circle [x radius=0.5,y radius=1];
            \node[draw, circle, fill=white, inner sep=1.5pt] at (-10,-3) {};
            \node[draw, circle, fill=white, inner sep=1.5pt] at (-10,-5) {};
            \node[] at (-10,-2.6) {$u_1$};
            \node[] at (-10,-5.4) {$u_2$};
            
            \coordinate (v1) at (-5,-6) node at (v1) {$G_3$};
            \draw (-6,-4) circle(1);
            \draw (-4,-5) circle(0.5);
            \draw ({-6+sqrt(2)/2},{-4+sqrt(2)/2})--(-4,-4.5);
            \draw (-6,-5)--(-4,-5.5);
            \node[draw, circle, fill=white, inner sep=1.5pt] at ({-6+sqrt(2)/2},{-4+sqrt(2)/2}) {};
            \node[draw, circle, fill=white, inner sep=1.5pt] at (-4, -4.5) {};
            \node[draw, circle, fill=white, inner sep=1.5pt] at (-6, -5) {};
            \node[draw, circle, fill=white, inner sep=1.5pt] at (-4, -5.5) {};

            \coordinate (v2) at (0,-6) node at (v2) {$G_4$};
            \draw (0,-4) circle (1);
            \draw (0,-3)--({-sqrt(3)/2},-4.5);
            \draw (0,-3)--({sqrt(3)/2},-4.5);
            \node[draw, circle, fill=white, inner sep=1.5pt] at (0, -3) {};
            \node[draw, circle, fill=white, inner sep=1.5pt] at ({-sqrt(3)/2},-4.5) {};
            \node[draw, circle, fill=white, inner sep=1.5pt] at ({sqrt(3)/2},-4.5) {};

            \coordinate (v4) at (5,-6) node at (v4) {$G_5$};
            \draw (5,-4) circle (1);
            \draw (5,-4)--(5,-3);
            \draw (5,-4)--({5-sqrt(3)/2},-4.5);
            \draw (5,-4)--({5+sqrt(3)/2},-4.5);
            \node[draw, circle, fill=white, inner sep=1.5pt] at (5,-3) {};
            \node[draw, circle, fill=white, inner sep=1.5pt] at (5,-4) {};
            \node[draw, circle, fill=white, inner sep=1.5pt] at ({5-sqrt(3)/2},-4.5) {};
            \node[draw, circle, fill=white, inner sep=1.5pt] at ({5+sqrt(3)/2},-4.5) {};
            \end{tikzpicture}  
        \end{center}
        \caption{The graphs $G_2$, $G_3$, $G_4$ and $G_5$}
        \label{tricyclic}
    \end{figure}
    Let $u_1$ and $u_2$ be the vertices of $G_2$ with $\deg(u_1)=\deg(u_2)=4$.
    The dual graph of $G_2$ is $C_4$, in which at least three edges are multiedges. If $\{u_1,u_2\}\in E(G_2)$, then the dual graph $G_2^\ast = \Bc_{s_1,s_2,s_3,1}$ for some $s_1,s_2,s_3\in \ZZ_{>0}$. Therefore, we have $\rank B(M(G_2)) = 3$ by Lemma~\ref{lem:adding path}. On the other hand, if $\{u_1, u_2\}\notin E(G_2)$, then the dual graph $G_2^\ast = \Bc_{s_1,s_2,s_3,s_4,1}$ for some $s_1,s_2,s_3,s_4\in \ZZ_{>0}$. Hence, we have $\rank B(M(G_2)) = 4$ by Lemma~\ref{lem:adding path}.

    The dual graphs of $G_3$ and $G_4$ are obtained by taking a 2-clique sum of $C_3$ and $C_4$ (i.e., identifying an edge of $C_3$ with that of $C_4$), with at least two edges being multiedges. In these graphs, the edges used for the 2-clique sum are deletable, while the other edges are contractible. Furthermore, at least two of the contractible edges are multiedges. Therefore, we have $\rank B(M(G_3)) \geq 4$ and $\rank B(M(G_4)) \geq 4$ by Lemma~\ref{lem:adding path}.

    The dual graph of $G_5$ is a graph obtained by replacing some edges of $K_4$ with multiedges. Since all edges of $K_4$ are both deletable and contractible, it follows that $\rank(B(M(K_4))) \geq 6$, and hence we have $\rank B(M(G_5)) \geq 6$.

    Similarly, adding one path to $G_2$ results in four possible graphs: $G_6, G_7, G_8$ and $G_9$, as shown in Figure~\ref{quadricyclic}.
    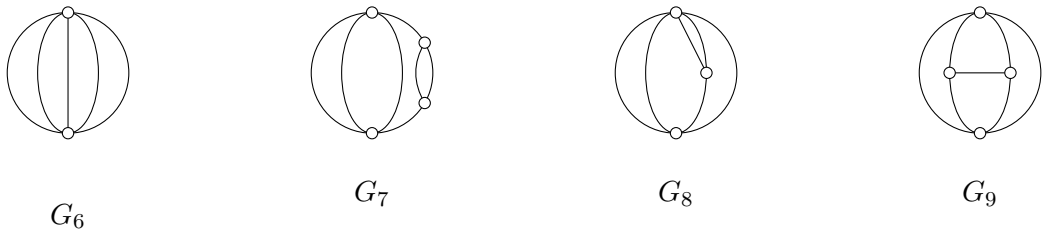
\begin{figure}[H]
        \begin{center}
            \begin{tikzpicture} [scale=0.8]
            \coordinate (v6) at (-10, -6);
            \node[below] at (v6) {$G_6$};
            \draw (-10, -4) circle (1);
            \draw (-10, -4) ellipse (0.5cm and 1cm);
            \draw (-10, -3)--(-10, -5);
            \node[draw, circle, fill=white, inner sep=1.5pt] at (-10,-3) {};
            \node[draw, circle, fill=white, inner sep=1.5pt] at (-10,-5) {};
    
            \coordinate (v7) at (-5,-6) node at (v7) {$G_7$};
            \coordinate (N1) at ({-5+sqrt(3)/2},-7/2);
            \coordinate (N2) at ({-5+sqrt(3)/2},-9/2);
            \draw (-5,-4) circle (1);
            \draw (-5,-4) circle [x radius=0.5,y radius=1];
            \draw (N1) to [out=-120,in=120] (N2);
            \node[draw, circle, fill=white, inner sep=1.5pt] at ({-5+sqrt(3)/2},-7/2) {};
            \node[draw, circle, fill=white, inner sep=1.5pt] at ({-5+sqrt(3)/2},-9/2) {};
            \node[draw, circle, fill=white, inner sep=1.5pt] at (-5,-3) {};
            \node[draw, circle, fill=white, inner sep=1.5pt] at (-5,-5) {};

            \coordinate (v8) at (0,-6) node at (v8) {$G_8$};
            \draw (0,-4) circle (1);
            \draw (0,-4) circle [x radius=0.5,y radius=1];
            \draw (0,-3)--(1/2,-4);
            \node[draw, circle, fill=white, inner sep=1.5pt] at (1/2,-4) {};
            \node[draw, circle, fill=white, inner sep=1.5pt] at (0,-3) {};
            \node[draw, circle, fill=white, inner sep=1.5pt] at (0,-5) {};
            
            \coordinate (v9) at (5,-6) node at (v9) {$G_9$};
            \draw (5,-4) circle (1);
            \draw (9/2,-4)--(11/2,-4);
            \draw (5,-4) circle [x radius=0.5,y radius=1];
            \node[draw, circle, fill=white, inner sep=1.5pt] at (11/2,-4) {};
            \node[draw, circle, fill=white, inner sep=1.5pt] at (9/2,-4) {};
            \node[draw, circle, fill=white, inner sep=1.5pt] at (5,-3) {};
            \node[draw, circle, fill=white, inner sep=1.5pt] at (5,-5) {};
        \end{tikzpicture}  
    \end{center}
    \caption{The graphs $G_6$, $G_7$, $G_8$ and $G_9$}
    \label{quadricyclic}
    \end{figure}

    The dual graph of $G_6$ is $C_5$, in which at least four edges are multiedges. Therefore, we conclude that $\rank B(M(G_6)) \geq 4$ by Lemma~\ref{lem:adding path}.

    The dual graphs of $G_7$ and $G_8$ are obtained by taking a 2-clique sum of $C_3$ and $C_4$, in which at least four edges are multiedges. Hence, we obtain $\rank B(M(G_7)) \geq 4$ and $\rank B(M(G_8)) \geq 4$ by Lemma~\ref{lem:adding path}.
    
    The dual graph of $G_9$ is obtained by replacing some edges of $G_5$ with multiedges. Therefore, we have $\rank B(M(G_9)) \geq 6$.
    
    From the above discussion and Lemma~\ref{lem:adding path}, the simple graphs satisfying $\rank B(M(G)) \leq 3$ are summarized as follows:
    \begin{itemize}
        \item A simple graph $G$ satisfies $\rank B(M(G)) = 0$ if and only if $G$ is a cycle graph, which shows (1).
        \item There exists no simple graph $G$ such that $\rank B(M(G)) = 1$, which shows (2). 
        \item A simple graph $G$ satisfying $\rank B(M(G)) = 2$ if and only if $G = \Dc_{s_1+1,s_2+1,1}^\ast$ for some $s_1,s_2 \in \ZZ_{> 0}$, which shows (3).
        \item A simple graph $G$ satisfying $\rank B(M(G)) = 3$ if and only if $G = \Dc_{s_1+1,s_2+1,s_3+1}^\ast$ or $G = \Bc_{s_1,s_2,s_3,1}^\ast$ for some $s_1,s_2,s_3 \in \ZZ_{> 0}$, which shows (4).
    \end{itemize}
    
\end{proof}

\begin{thm}\label{thm:smallmultibasepolytope}
    Let $G$ be a 2-connected multigraph. Then we have the following:
    \begin{enumerate}[\normalfont(1)]
        \item $B(M(G))\in {\bf GMB}_0$ if and only if $B(M(G))$ is unimodularly equivalent to the matroid base polytope $B(M(\Ac_{s+1}))$ for some $s\in \ZZ_{>0}$.
        \item $B(M(G))\in {\bf GMB}_1$ if and only if $B(M(G))$ is unimodularly equivalent to the matroid base polytope $B(M(\Bc_{s,p}))$ for some $s,p\in \ZZ_{>0}$.
        \item $B(M(G))\in {\bf GMB}_2$ if and only if $B(M(G))$ is unimodularly equivalent to the matroid base polytope $B(M(\Bc_{s_1,s_2,p}))$ for some $s_1,s_2\in \ZZ_{>0}$ and $p\in \ZZ_{\ge 0}$.
        \item $B(M(G))\in {\bf GMB}_3$ if and only if $B(M(G))$ is unimodularly equivalent to one of the following matroid base polytopes:
        \begin{itemize}
        \item[(i)] $B(M(\Bc_{s_1,s_2,s_3,p}))$ for some $s_1,s_2,s_3\in \ZZ_{>0}$ and $p\in \ZZ_{\ge 0}$;
        \item[(ii)] $B(M(\Cc_{s,t,p,q}))$ for some $s,q\in \ZZ_{>0}$ and $t,p\in \ZZ_{\ge 0}$;
        \item[(iii)] $B(M(\Dc_{s_1+1,s_2+1,s_3+1}))$ for some $s_1,s_2,s_3\in \ZZ_{> 0}$.
        \end{itemize}
    \end{enumerate}
\end{thm}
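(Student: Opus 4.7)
The approach is to reduce the multigraph classification to the simple-graph case of Theorem~\ref{thm:smallsimplebasepolytope} by iterating Lemma~\ref{lem:adding path}. For any $2$-connected multigraph $G$, let $G^s$ denote its underlying simple graph (obtained by identifying each bundle of parallel edges to a single edge); then $G$ is built from $G^s$ by a sequence of length-$1$ ear additions, each adding one extra copy of an already existing edge, so iterating Lemma~\ref{lem:adding path} gives $\rank B(M(G)) \ge \rank B(M(G^s))$. Hence the hypothesis $\rank B(M(G)) \le 3$ forces $G^s$ into the finite list of Theorem~\ref{thm:smallsimplebasepolytope}, reducing the problem to a case analysis on $G^s$ together with the allowed multi-edge additions.

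For each possible $G^s$, I would track the rank via the precise form of Lemma~\ref{lem:adding path}: adding a parallel copy of an edge $\{u,v\}$ preserves the rank iff $\{u,v\}$ is deletable in the current graph, and otherwise raises it by $1$. A key observation is that once any bundle contains at least two copies, each of its edges becomes deletable, so at most one $+1$ contribution arises per edge of $G^s$ that is thickened into a bundle. For $G^s = C_n$ (the only nontrivial rank-$0$ simple $2$-connected graph besides $K_2$), every edge is non-deletable, so thickening exactly $k$ distinct edges produces rank exactly $k$; a circuit-matching argument shows the resulting matroid depends only on the multiset $\{s_i+1\}$ of bundle sizes and the number $n-k$ of remaining single edges, and is isomorphic to $M(\Bc_{s_1,\dots,s_k,p})$ with $p = n-k-1$ whenever this is nonnegative, and to $M(\Dc_{s_1+1,s_2+1,s_3+1})$ in the degenerate case $n=k=3$. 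Taking $k=0,1,2,3$ (together with the trivial case $G^s=K_2$ for $k=0$) thus handles (1), (2), (3), parts of (4)(i), and one case of (4)(iii).

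For the remaining $G^s$ of positive rank in Theorem~\ref{thm:smallsimplebasepolytope}, I would use planar duality, which preserves the base-polytope rank since $B(M(H^\ast)) = \mathbf{1} - B(M(H))$ for any planar $H$. When $G^s = \Dc^\ast_{s_1+1,s_2+1,1}$ (rank $2$), at most one additional bundle may be added, and duality identifies the resulting multigraph with a member of the $\Cc_{s,t,p,q}$ family via the correspondence between multi-edges on $G^s$ and edge subdivisions of $\Dc_{s_1+1,s_2+1,1}$, yielding (4)(ii). When $G^s$ is $\Bc^\ast_{s_1,s_2,s_3,1}$ or $\Dc^\ast_{s_1+1,s_2+1,s_3+1}$, both already of rank $3$, no further bundle is allowed, and duality yields the remaining cases of (4)(i) (with $p=1$) and (4)(iii).

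The main technical obstacle is producing, in each case, an explicit matroid isomorphism between the configuration obtained and the canonical graph appearing in the statement. Graph-theoretically distinct multigraphs can yield isomorphic graphic matroids: for example, $C_4$ with two bundles of size $2$ placed on non-adjacent edges is not graph-isomorphic to $\Bc_{1,1,1}$, yet both matroids share the same two parallel classes and the same four length-$4$ circuits, yielding an immediate circuit-preserving bijection. Such matchings must be verified for every bundle configuration on a cycle and for every one-bundle addition to $\Dc^\ast_{s_1+1,s_2+1,1}$; this is the most delicate step, especially in the rank-$3$ case where several sub-cases must be enumerated and matched with the three families in (4).
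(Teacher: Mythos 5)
Your overall strategy is the same as the paper's: reduce to the simple-graph classification of Theorem~\ref{thm:smallsimplebasepolytope}, track the rank contribution of each added parallel edge through Lemma~\ref{lem:adding path} (rank preserved iff the edge is deletable, $+1$ otherwise), and identify the resulting multigraphs with the canonical families via planar duality. Your explicit observation that the graphic matroid of a cycle with bundles depends only on the multiset of bundle sizes, so that the placement of the bundles on the cycle is irrelevant, is a point where you are more careful than the paper, which passes over this silently.

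There is, however, a concrete gap in your third paragraph: you forget that the unique deletable edge of each positive-rank simple graph in the list may itself be thickened arbitrarily without changing the rank --- this is exactly your own rule from the second paragraph, and it is how the parameters $p$ in $\Bc_{s_1,s_2,p}$, $t$ in $\Cc_{s,t,p,q}$ and $p$ in $\Bc_{s_1,s_2,s_3,p}$ arise on the dual side. Concretely: (a) for $G^s=\Dc^\ast_{s_1+1,s_2+1,1}$ you allow ``at most one additional bundle,'' but a bundle may also sit on the deletable chord. With only the chord thickened one obtains rank-$2$ multigraphs whose underlying simple graph is not a cycle, so they are not covered by your $C_n$ paragraph and the ``only if'' direction of (3) is incomplete (these graphs are planar duals of cycles with two bundles, hence do land in the $\Bc_{s_1,s_2,p}$ family, but this must be argued); with the chord and one non-deletable edge both thickened one obtains the members of $\Cc_{s,t,p,q}$ with $t\ge 1$, which a literal reading of your count excludes. (b) For $G^s=\Bc^\ast_{s_1,s_2,s_3,1}$ you assert ``no further bundle is allowed,'' but the edge $\{u_1,u_2\}$ dual to the single edge of $\Bc_{s_1,s_2,s_3,1}$ is deletable, so it can be thickened while the rank stays $3$; these rank-$3$ multigraphs must still be matched to family (4)(i) (their duals are $\Bc_{s_1,s_2,s_3,p}$ with $p\ge 1$), and as written your analysis wrongly discards them. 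Both omissions are repairable with the machinery you already set up, but as stated the case analysis does not exhaust the rank-$2$ and rank-$3$ multigraphs, so the ``only if'' directions of (3) and (4) are not established.
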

\begin{proof}
    \begin{enumerate}[\normalfont(1)]
        \item Since no edge of a cycle graph is deletable, we have $\rank B(M(\Bc_{s_1,\ldots,s_n,p})) = n$ for $n\in \ZZ_{>0}$, $s_1,\ldots,s_n\in \ZZ_{>0}$ and $p\in \ZZ_{\ge 0}$ from Lemma~\ref{lem:adding path}.
        Therefore, from Theorem~\ref{thm:smallsimplebasepolytope}, $\rank B(M(G)) \in {\bf GMB}_0$ if and only if $B(M(G))$ is unimodularly equivalent to $B(M(\Ac_{s+1}))$ for some $s\in \ZZ_{>0}$.
        \item Since no simple graph $G$ satisfies $\rank B(M(G)) = 1$ from Theorem~\ref{thm:smallsimplebasepolytope}, the only graphs that satisfy $\rank B(M(G)) = 1$ are those in which exactly one edge of a cycle graph is a multiedge.
        From the above discussion and Theorem~\ref{thm:smallsimplebasepolytope}, we have $\rank B(M(G)) \in {\bf GMB}_1$ if and only if $B(M(G))$ is unimodularly equivalent to $B(M(\Bc_{s,p}))$ for some $s,p\in \ZZ_{>0}$.
        \item In the graph $G_1$, the edge $\{v_1,v_2\}$ is deletable, while all other edges are not deletable. The dual of the graph in which $\{v_1,v_2\}$ is a multiedge coincides with the graph $\Bc_{s_1,s_2,p}$ for some $s_1,s_2\in \ZZ_{>0}$ and $p\in \ZZ_{\ge 0}$.
        From the above discussion and Theorem~\ref{thm:smallsimplebasepolytope}, we have $\rank B(M(G)) \in {\bf GMB}_2$ if and only if $B(M(G))$ is unimodularly equivalent to $B(M(\Bc_{s_1,s_2,p}))$ for some $s_1,s_2\in \ZZ_{>0}$ and $p\in \ZZ_{\ge 0}$.
        \item In the graph $G_1$, if an edge other than the one with $\{v_1,v_2\}$ becomes a multiedge, then the rank of the base polytope of the graphic matroid of the resulting graph is 3, where the graph is $\Cc_{s,t,p,q}$ for some $s,q \in \ZZ_{>0}$ and $t,p \in \ZZ_{\ge 0}$.
        On the other hand, if $\{u,v\} \not\in E(G_1)$, then this graph has no deletable edges. Therefore, by Theorem~\ref{thm:smallsimplebasepolytope} and Lemma~\ref{lem:adding path}, if this graph has a multiedge, then $\rank B(M(G_1)) \geq 4$.
        Furthermore, in the graph $G_2$, the edge with $\{u_1,u_2\}$ is deletable, while all other edges are not deletable. The dual of the graph in which $\{u_1,u_2\}$ is a multiedge coincides with the graph $\Bc_{s_1,s_2,s_3,p}$ for some $s_1,s_2,s_3 \in \ZZ_{> 0}$ and $p \in \ZZ_{\ge 0}$. From the above discussion and Theorem~\ref{thm:smallsimplebasepolytope}, $\rank B(M(G)) \in {\bf GMB}_3$ if and only if $B(M(G))$ is unimodularly equivalent to $B(M(\Bc_{s_1,s_2,s_3,p}))$ or $B(M(\Dc_{s_1+1,s_2+1,s_3+1}))$ for some $s_1,s_2,s_3 \in \ZZ_{> 0}$ and $p \in \ZZ_{\ge 0}$, or to $B(M(\Cc_{s,t,p,q}))$ for some $s,q \in \ZZ_{>0}$ and $t,p \in \ZZ_{\ge 0}$.
    \end{enumerate}
\end{proof}

Let ${\bf MB}_n$ denote the sets of unimodular equivalence classes of matroid base polytopes with rank $n$.
Clearly, we have ${\bf GMB}_n\subset {\bf MB}_n$.
It is natural to consider whether the results we obtained for ${\bf GMB}_n$ can be generalized to the case of ${\bf MB}_n$.
Actually, the concept of ear decomposition also exists for matroids (see \cite{Coullard1996Independence}).
However, unlike in the case of graphs, we have no method for classifying matroids with a small number of ears, so another means would be needed.

On the other hand, it is also natural to ask how different ${\bf GMB}_n$ and ${\bf MB}_n$ are.
By a similar argument as in Remark~\ref{rem:false}, we can see that $B(U_{2,n})\in {\bf MB}_n\setminus {\bf GMB}_n$ for $n\ge 4$.

\begin{q}
    Does the relationship ${\bf MB}_n={\bf GMB}_n$ hold if $n\le 3$?
\end{q}

\section{The relationships among our polytopes}\label{sec:relation}

In this section, we discuss the relationships among ${\bf MI}_n, {\bf GMB}_n, {\bf Order}_n$, ${\bf Stab}_n$ and ${\bf Edge}_n$ 
in the cases $n=0,1,2,3$ by using the results in the previous section. 

\subsection{Order polytopes, stable set polytopes and edge polytopes}
In this subsection, we recall order polytopes, stable set polytopes and edge polytopes. 

\bigskip

\noindent {\large {\bf Order polytopes}}.
Let $\Pi$ be a finite partially ordered set (poset, for short) equipped with a partial order $\preceq$. 
For a subset $I \subset \Pi$, we say that $I$ is a \textit{poset ideal} of $\Pi$ if $p \in I$ and $q \preceq p$ imply $q \in I$.
For a subset $A \subset \Pi$, we call $A$ an \textit{antichain} of $\Pi$ if $p \not\preceq q$ and $q \not\preceq p$ for any $p,q \in A$ with $p \neq q$. 
Note that $\emptyset$ is regarded as a poset ideal and an antichain.

For a poset $\Pi:=\{p_1,\ldots,p_d\}$, let \begin{align*}
\Oc_\Pi:=\{x \in \RR^\Pi : \; x(p_i) \geq x(p_j) \text{ if } p_i \preceq p_j \text{ in }\Pi, \;\; 
0 \leq x(p_i) \leq 1 \text{ for }i=1,\ldots,d \}.
\end{align*}
A convex polytope $\Oc_\Pi$ is called the \textit{order polytope} of $\Pi$. 
It is known that the vertices of $\Oc_\Pi$ one-to-one correspond to the poset ideals of $\Pi$, that is, $\set{x\in \RR^\Pi : x \text{ is a vertex of }\Oc_\Pi}=\set{\chi_I\in \RR^\Pi : I \text{ is a poset ideal of }\Pi}$ (\cite{stanley1986twoposet}). 
Moreover, the order polytope $\Oc_\Pi$ has IDP (see \cite{ohsugi2001compressed}).

\smallskip

We also recall another polytope arising from $\Pi$, which is defined as follows: 
\begin{align*}
\Cc_\Pi:=\{x \in \RR^\Pi : \;&x(p_i) \geq 0 \text{ for }i=1,\ldots,d, \\
&\sum_{j=1}^kx(p_{i_j}) \leq 1 \text{ for }p_{i_1} \preceq \cdots \preceq p_{i_k} \text{ in }\Pi\}.\end{align*} 
A convex polytope $\Cc_\Pi$ is called the \textit{chain polytope} of $\Pi$. 
Similarly to order polytopes, it is known that the vertices of $\Cc_\Pi$ one-to-one correspond to the antichains of $\Pi$, that is, $\set{x\in \RR^\Pi : x \text{ is a vertex of }\Cc_\Pi}=\set{\chi_A\in \RR^\Pi : A \text{ is an antichain of }\Pi}$ (\cite{stanley1986twoposet}). 

\smallskip

In general, the order polytope and the chain polytope of $\Pi$ are not unimodularly equivalent, 
but the following is known: 

\begin{thm}[{\cite[Theorem 2.1]{Hibi2016unimodular}}]\label{X}
Let $\Pi$ be a poset. Then $\Oc_\Pi$ and $\Cc_\Pi$ are unimodularly equivalent if and only if $\Pi$ does not contain the X-shape subposet. 
\end{thm}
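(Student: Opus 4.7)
The plan is to handle the two directions separately, with the bulk of the effort going into the sufficiency.

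\emph{Necessity.} Suppose $\Pi$ contains an X-shape as an induced subposet, spanned by elements $p_1,p_2\prec p_3\prec p_4,p_5$. I would first establish a local obstruction. Using the standard facet descriptions (cover relations plus minimal/maximal boundary conditions on the order side; non-negativity plus maximal-chain inequalities on the chain side), one checks that $\Oc_X$ has $8$ facets and $\Cc_X$ has $9$: the order side contributes $4$ cover, $2$ lower, and $2$ upper boundary facets, whereas the chain side contributes $5$ non-negativity facets plus one inequality for each of the $4$ maximal chains of $X$. Hence $\Oc_X\not\cong\Cc_X$. To propagate this to the whole $\Pi$, I plan to identify faces of $\Oc_\Pi$ and $\Cc_\Pi$, obtained by setting the coordinates outside $X$ to appropriate extreme lattice values, that are unimodularly equivalent to $\Oc_X$ and $\Cc_X$ respectively; any unimodular equivalence of the ambient polytopes would then restrict to one between these faces, contradicting the facet count.

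\emph{Sufficiency.} Assume $\Pi$ has no X-subposet, i.e.\ no element simultaneously covers at least two elements and is covered by at least two. I would construct the equivalence $\Oc_\Pi\cong\Cc_\Pi$ by induction on $|\Pi|$. The base step handles posets in which every element has at most one cover below it, where Stanley's transfer $\phi(x)(p)=x(p)-\max\{x(q):q\lessdot p\}$ becomes the genuinely linear and unimodular map $x(p)\mapsto x(p)-x(q_p)$, with $q_p$ the unique predecessor of $p$. For the inductive step, I would locate an element $p^\ast$ whose removal preserves X-freeness and which admits a ``linearising neighbourhood'' --- either a unique cover below it or a unique element above it. I would then build an elementary integer-affine move that linearises the transfer at $p^\ast$, reduces the problem to $\Pi\setminus\{p^\ast\}$, and appeals to the induction hypothesis. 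The resulting composition of triangular-unimodular moves, relative to a fixed linear extension of $\Pi$, realises a single unimodular equivalence between $\Oc_\Pi$ and $\Cc_\Pi$, and one checks that it sends the indicator vector of a poset ideal $I$ to the indicator vector of the antichain $\max(I)$, matching the classical vertex bijection.

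The main obstacle is the sufficiency direction, and in particular the global coherence of the piecewise choices. Stanley's transfer is only piecewise linear in general, and the X-free hypothesis must be deployed precisely to ensure that the cells of linearity can be glued into a single integer-affine map. I expect the heart of the argument to be a lemma asserting that, for an X-free poset, a linearising element $p^\ast$ with the required neighbourhood always exists and its removal does not create an X in the remainder; threading this through the induction is the delicate technical step. As a backup plan, if a single-pass map proves elusive, one can instead realise the equivalence as a composition of elementary unimodular moves (one per cover relation), each separately legitimate under X-freeness.
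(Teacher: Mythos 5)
This theorem is quoted from Hibi--Li \cite[Theorem 2.1]{Hibi2016unimodular} and is not proved in the paper, so there is no in-paper argument to compare yours against; judged on its own terms, your proposal has a genuine gap in each direction.

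For necessity, your local facet count ($8$ facets for $\Oc_X$ versus $9$ for $\Cc_X$) is correct, but the propagation step fails: a unimodular equivalence $\Oc_\Pi\to\Cc_\Pi$ induces a bijection between the face lattices, but there is no reason it should carry the particular face of $\Oc_\Pi$ you cut out by freezing the coordinates outside $X$ onto the particular face of $\Cc_\Pi$ you cut out. ``Restricting'' the ambient equivalence to these two chosen faces is not a legitimate operation, so the local obstruction does not propagate this way. The standard repair is global: one shows that $\Cc_\Pi$ always has at least as many facets as $\Oc_\Pi$ (comparing $|\Pi|+\#\{\text{maximal chains}\}$ with $\#\{\text{cover relations}\}+\#\{\text{minimal elements}\}+\#\{\text{maximal elements}\}$) and that the inequality is strict exactly when $\Pi$ contains an X; this global count is how Hibi--Li argue.

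For sufficiency, the more serious problem is that your working reformulation of X-freeness --- ``no element simultaneously covers at least two elements and is covered by at least two'' --- is not equivalent to the absence of an X-shape subposet, and the entire induction rests on it. Take $\Pi=\{a,b,m,n,c,d\}$ with cover relations $a\lessdot m$, $b\lessdot m$, $m\lessdot n$, $n\lessdot c$, $n\lessdot d$. Every element of this poset has at most one lower cover or at most one upper cover, so it satisfies your condition; yet $\{a,b,m,c,d\}$ is an X-shape subposet, and indeed $\Oc_\Pi$ has $9$ facets while $\Cc_\Pi$ has $10$, so the two polytopes are not unimodularly equivalent. Any induction that succeeds under your hypothesis would therefore prove a false statement. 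The correct reformulation is that for every $p$ at least one of $\{q:q\prec p\}$ and $\{q:q\succ p\}$ is a chain (equivalently, no element has two incomparable elements below it \emph{and} two incomparable elements above it), and it is this condition, not a condition on covers, that allows Stanley's transfer map to be linearised: when the strict down-set of $p$ is a chain, $\max\{x(q):q\lessdot p\}$ is the single coordinate at the unique lower cover, and at the remaining elements one must work dually with the up-set. Your base case (unique lower covers, hence a genuinely linear triangular transfer) is fine, but the inductive step would have to be rebuilt on the correct hypothesis, and the gluing of the ``downward'' and ``dual'' linearisations is precisely the content of the Hibi--Li construction rather than something that can be deferred.
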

Here, the \textit{X-shape poset} is the poset $\{z_1,z_2,z_3,z_4,z_5\}$ equipped with the partial orders $z_1 \preceq z_3 \preceq z_4$ and $z_2 \preceq z_3 \preceq z_5$. 

\bigskip

\noindent {\large {\bf Stable set polytopes}}.
Let $G$ be a finite simple graph on the vertex set $V(G)$ with the edge set $E(G)$.
We say that $T \subset V(G)$ is a \textit{stable set} or an \textit{independent set} (resp. a \textit{clique}) 
if $\{v,w\} \not\in E(G)$ (resp. $\{v,w\} \in E(G)$) for any distinct vertices $v,w \in T$. 
Note that the empty set and each singleton are regarded as independent sets.
 
We define a lattice polytope associated with a graph $G$ as follows: 
\begin{align*}
\Stab_G:=\conv(\{\chi_T : T \text{ is a stable set of }G\})\subset \RR^{V(G)}. 
\end{align*}
We call $\Stab_G$ the \textit{stable set polytope} of $G$. 

In this paper, we focus on the stable set polytopes of perfect graphs (we say that a graph $G$ is \textit{perfect} if for any induced subgraph $H$ of $G$, the chromatic number of $H$ is equal to the maximum size of cliques of $H$). 
If $G$ is perfect, then $\Stab_G$ has IDP (see \cite{ohsugi2001compressed}) and the facets of $\Stab_G$ are completely characterized (\cite[Theorem 3.1]{chvatal1975stable}). 
More concretely, the facet defining inequalities of $\Stab_G$ can be written as follows: 
\begin{equation*}\label{facets:stab}
\begin{split}
&x(i) \ge 0 \;\; \text{ for each } i \in V(G); \\ 
&1- \sum_{i\in Q}x(i)\ge 0 \;\;\text{ for each maximal clique }Q. 
\end{split}
\end{equation*}
From this description, we can see that the stable set polytope of a perfect graph $G$ has rank 0 if and only if $G=K_n$ for some $n\in \ZZ_{>0}$.
Moreover, for two graphs $G$ and $G'$, we can see that $\Stab_G\times\Stab_{G'}$ is unimodularly equivalent to $\Stab_{G\sqcup G'}$ by observing the stable sets of $G\sqcup G'$, where $G\sqcup G'$ denotes the graph on the vertex set $V(G)\sqcup V(G')$ with the edge set $E(G)\sqcup E(G')$.

Given a poset $\Pi$, we define the \textit{comparability graph} of $\Pi$, denoted by $G(\Pi)$, 
as a graph on the vertex set $V(G(\Pi))$ with the edge set 
$$E(G(\Pi))=\{\{p_i,p_j\} : \text{$p_i$ and $p_j$ are comparable in $\Pi$}\}.$$ 
It is known that $G(\Pi)$ is perfect for any $\Pi$ (see, e.g., \cite[Section 5.5]{diestel2017graph}). 
Moreover, we see that $\Cc_\Pi=\Stab_{G(\Pi)}$. 

\bigskip

\noindent {\large {\bf Edge polytopes}}.
Consider a graph $G$ on the vertex set $V(G)$ with the edge set $E(G)$. 
We define a lattice polytope associated to $G$ as follows: 
\begin{align*}
P_G=\conv(\{\chi_e : e \in E(G)\})\subset \RR^{V(G)}. 
\end{align*}
We call $P_G$ the \textit{edge polytope} of $G$. 

We have that $\dim P_G=|V(G)|-b(G)-1$, where $b(G)$ is the number of bipartite connected components of $G$ (see \cite[Proposition 10.4.1]{stanley1986twoposet}). 
It is known that $P_G$ has IDP if and only if $G$ satisfies the \textit{odd cycle condition}, i.e., for each pair of odd cycles $C$ and $C'$ with no common vertex, there is an edge $\{v,v'\}$ with $v \in V(C)$ and $v' \in V(C')$ (see \cite[Corollary 10.3.11]{stanley1986twoposet}).

\bigskip

Let ${\bf Order}_n$, ${\bf Stab}_n$ and ${\bf Edge}_n$ denote the sets of unimodular equivalence classes of order polytopes, stable set polytopes of perfect graphs and edge polytopes of graphs satisfying the odd cycle condition with rank $n$, respectively.

\begin{thm}[{\cite{matsushita2022three}}]\label{thm:relationOSE}
    The following relationships hold:
\begin{itemize}
\setlength{\parskip}{0pt} 
\setlength{\itemsep}{3pt}
\item ${\bf Order}_n={\bf Stab}_n={\bf Edge}_n$ if $n\le 1$;
\item ${\bf Stab}_2 \cup {\bf Edge}_2={\bf Order}_2$ and no inclusion between ${\bf Stab}_2$ and ${\bf Edge}_2$;
\item there is no inclusion among ${\bf Order}_n$, ${\bf Stab}_n$ and ${\bf Edge}_n$ if $n\ge 3$.
\end{itemize}
\end{thm}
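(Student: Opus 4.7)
The plan is to establish each of the three bullets by an explicit classification of the polytopes in each class with the given rank, followed by a direct comparison of the resulting lists.

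For the first bullet (ranks $0$ and $1$), I would argue structurally. A lattice polytope of dimension $d$ with rank $0$ has exactly $d+1$ facets and is therefore a simplex; under the $(0,1)$ and IDP hypotheses it is unimodularly equivalent to the standard simplex $\Delta_d$. Using the facet descriptions recalled in this section, $\Oc_\Pi$ has rank $0$ iff $\Pi$ is a single chain (so that only the min, max, and $|\Pi|-1$ cover inequalities contribute, totalling $|\Pi|+1$ facets); $\Stab_G$ of a perfect graph has rank $0$ iff $G=K_n$ (the unique maximal clique gives just one clique inequality together with the $n$ non-negativity inequalities); and $P_G$ has rank $0$ only in the analogous extremal cases. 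Each of these yields $\Delta_d$, proving ${\bf Order}_0={\bf Stab}_0={\bf Edge}_0$. For rank $1$, one shows that a $(0,1)$-polytope of rank $1$ with IDP is unimodularly equivalent to $\Delta_a\times\Delta_b$, and each of the three classes realises exactly such products via a disjoint union of two rank-$0$ underlying combinatorial objects, yielding ${\bf Order}_1={\bf Stab}_1={\bf Edge}_1$.

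For the second bullet (rank $2$), I would enumerate in each class those combinatorial objects for which the associated polytope has exactly $\dim+3$ facets. The crucial bridge is $\Cc_\Pi=\Stab_{G(\Pi)}$ combined with Theorem~\ref{X}: whenever $\Pi$ avoids the X-shape subposet, $\Oc_\Pi\cong \Stab_{G(\Pi)}$, so a large family of order polytopes is automatically a stable set polytope of a perfect (comparability) graph. To prove ${\bf Stab}_2\cup{\bf Edge}_2={\bf Order}_2$ I would check, for each entry in the rank-$2$ order polytope list, which of the two realisations applies. For the non-inclusion between ${\bf Stab}_2$ and ${\bf Edge}_2$ I would exhibit an explicit stable set polytope whose face structure forces a configuration incompatible with the edge polytope construction (and symmetrically for the reverse direction), distinguished by an invariant such as the number of vertices or the $h^\ast$-polynomial.

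For the third bullet (rank $3$), the same strategy yields explicit enumerations in all three classes, and the three pairwise non-inclusions follow by producing, for each pair $(\Sc,\Tc)$, a polytope in $\Sc$ absent from $\Tc$. The main obstacle is precisely this non-inclusion step: while the enumerations are finite and manageable, certifying that a candidate polytope genuinely fails to be realisable in another class requires a robust combinatorial invariant (vertex count, $f$-vector, $h^\ast$-polynomial, or a forbidden subconfiguration argument via Theorem~\ref{X}) rather than a mere failed attempt at finding a realisation. Since the detailed case analysis has already been carried out in \cite{matsushita2022three}, the proof here essentially reduces to invoking those classifications and cross-referencing them with the classifications obtained in Section~\ref{sec:main}.
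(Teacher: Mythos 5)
The paper does not actually prove this statement: it is imported wholesale from \cite{matsushita2022three}, and the bracketed citation in the theorem header is the entire justification given. Your closing sentence --- that the proof reduces to invoking the classifications of that earlier paper and cross-referencing them --- is therefore exactly what the authors do, and to that extent your proposal matches the paper's treatment.

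If, however, your sketch is read as a self-contained argument, it contains a false step. You claim that a $(0,1)$-polytope of rank $1$ with IDP is unimodularly equivalent to $\Delta_a\times\Delta_b$, and that each of the three classes realises only such products via disjoint unions of two rank-$0$ combinatorial objects. This is wrong. Take the three-element poset $\Pi$ with $p_1\prec p_2$ and $p_3\prec p_2$; its comparability graph is the path $P_3$, and $\Stab_{P_3}=\Cc_\Pi\cong\Oc_\Pi$ by Theorem~\ref{X} (no X-shape subposet). This is a $3$-dimensional polytope with $5$ facets ($3$ nonnegativity inequalities and $2$ maximal-clique inequalities), hence of rank $1$, but it has only $5$ vertices, whereas the unique candidate product $\Delta_1\times\Delta_2$ has $6$. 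So it is a rank-$1$ element of ${\bf Order}_1\cap{\bf Stab}_1$ that is not a product of two simplices, and its underlying poset is connected rather than a disjoint union of two chains. The equality ${\bf Order}_1={\bf Stab}_1={\bf Edge}_1$ is still true, but the common class is strictly larger than $\set{\Delta_a\times\Delta_b}$, so your structural shortcut would not establish it; one genuinely needs the case-by-case classification. A second, smaller gap: for ${\bf Stab}_2\cup{\bf Edge}_2={\bf Order}_2$ you only describe checking ${\bf Order}_2\subset{\bf Stab}_2\cup{\bf Edge}_2$, whereas the reverse inclusions (every rank-$2$ stable set polytope of a perfect graph, and every rank-$2$ edge polytope satisfying the odd cycle condition, is an order polytope) also require proof. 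Both points are settled in \cite{matsushita2022three}, which is why the present paper simply cites it rather than reproving anything.
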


\bigskip

We define a poset as follows:
For $s,q \in \ZZ_{>0}$ and $t,p \in \ZZ_{\geq 0}$,  let 
$$W_{s,t,p,q}=\{\alpha_1,\ldots,\alpha_s,\beta_1,\ldots,\beta_p,\gamma_1,\ldots,\gamma_t,\delta_1,\ldots,\delta_q,\mu_1,\mu_2,\mu_3\}$$ be the poset equipped with the partial orders
\begin{itemize}
\item $\mu_1 \prec \alpha_1 \prec \cdots \prec \alpha_s$,
\item $\mu_1 \prec \beta_1 \prec \cdots \prec \beta_p \prec \mu_2$, 
\item $\mu_3 \prec \gamma_1 \prec \cdots \prec \gamma_t \prec \mu_2$ and
\item $\mu_3 \prec \delta_1 \prec \cdots \prec \delta_q$.
\end{itemize}
Figure~\ref{posetW} shows the Hasse diagram of $W_{s,t,p,q}$. 

\begin{figure}[ht]
    \centering
    {\scalebox{0.7}{
\begin{tikzpicture}[line width=0.05cm]

\coordinate (N11) at (0,4); 
\coordinate (N12) at (0.4,3); 
\coordinate (N13) at (1.2,1); 
\coordinate (N15) at (1.6,0); 
\coordinate (N21) at (2,1); 
\coordinate (N22) at (2.8,3); 
\coordinate (N23) at (3.2,4); 
\coordinate (N25) at (3.6,3); 
\coordinate (N26) at (4.4,1);
\coordinate (N31) at (4.8,0); 
\coordinate (N32) at (5.2,1); 
\coordinate (N33) at (6,3); 
\coordinate (N35) at (6.4,4); 


\draw (N11)--(N12);
\draw (N12)--(0.6,2.5);
\draw[dotted] (0.68,2.3)--(0.92,1.7);
\draw (1.0,1.5)--(N13);
\draw (N13)--(N15);
\draw (N15)--(N21);
\draw (N21)--(2.2,1.5);
\draw[dotted] (2.28,1.7)--(2.52,2.3);
\draw (2.6,2.5)--(N22);
\draw (N22)--(N23);
\draw (N23)--(N25);
\draw (N25)--(3.8,2.5);
\draw[dotted] (3.88,2.3)--(4.12,1.7);
\draw (4.2,1.5)--(N26);
\draw (N26)--(N31);
\draw (N31)--(N32);
\draw (N32)--(5.4,1.5);
\draw[dotted] (5.48,1.7)--(5.72,2.3);
\draw (5.8,2.5)--(N33);
\draw (N33)--(N35);

\draw [line width=0.05cm, fill=white] (N11) circle [radius=0.15]
node[below left] {\Large $\alpha_s$}; 
\draw [line width=0.05cm, fill=white] (N12) circle [radius=0.15]
node[below left] {\Large $\alpha_{s-1}$}; 
\draw [line width=0.05cm, fill=white] (N13) circle [radius=0.15]
node[below left] {\Large $\alpha_1$}; 
\draw [line width=0.05cm, fill=white] (N15) circle [radius=0.15]
node[] at (1.6,-0.5) {\Large $\mu_1$}; 
\draw [line width=0.05cm, fill=white] (N21) circle [radius=0.15]
node[below right] {\Large $\beta_1$};
\draw [line width=0.05cm, fill=white] (N22) circle [radius=0.15]
node[above left] {\Large $\beta_p$}; 
\draw [line width=0.05cm, fill=white] (N23) circle [radius=0.15]
node[] at (3.2,4.5) {\Large $\mu_2$}; 
\draw [line width=0.05cm, fill=white] (N25) circle [radius=0.15]
node[above right] {\Large $\gamma_t$}; 
\draw [line width=0.05cm, fill=white] (N26) circle [radius=0.15]
node[below left] {\Large $\gamma_1$}; 
\draw [line width=0.05cm, fill=white] (N31) circle [radius=0.15]
node[] at (4.8,-0.5) {\Large $\mu_3$};
\draw [line width=0.05cm, fill=white] (N32) circle [radius=0.15]
node[below right] {\Large $\delta_1$}; 
\draw [line width=0.05cm, fill=white] (N33) circle [radius=0.15]
node[below right] {\Large $\delta_{q-1}$}; 
\draw [line width=0.05cm, fill=white] (N35) circle [radius=0.15]
node[below right] {\Large $\delta_q$};
\end{tikzpicture} }}
\caption{The poset $W_{s,t,p,q}$}
\label{posetW}
\end{figure}
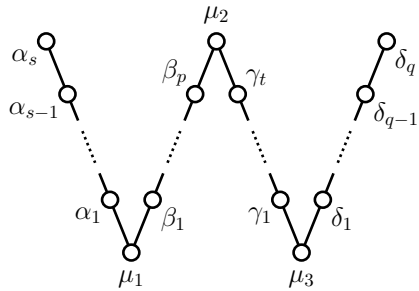


In addition, for $n\in \ZZ_{>0}$, $s_1,\ldots,s_n\in \ZZ_{>0}$ and $p\in \ZZ_{\ge 0}$, we define the graph $G_{s_1,\ldots,s_n,p}$ as the graph obtained by identifying one vertex of $K_{s_i+1}$ for each $i\in [1,n]$ with $n$ distinct vertices of $K_{p+n}$, respectively. See Figure~\ref{graphG}.
Let us consider the vertex set $V(G_{s_1,\ldots,s_n,p})=\rbra{\bigcup_{i\in [1,n]}V(K_{s_i+1})} \cup V(K_{p+n})$, such that 
\begin{align*}
    V(K_{s_i+1})=\set{v_{i,1},\ldots,v_{i,s_i},m_i}, \quad V(K_{p+n})=\set{m_1,\ldots,m_n,u_1,\ldots,u_p}.
\end{align*}
Note that $G_{s_1,\ldots,s_n,p}$ is perfect (see \cite[Propositions~5.5.1 and 5.5.2]{diestel2017graph}).

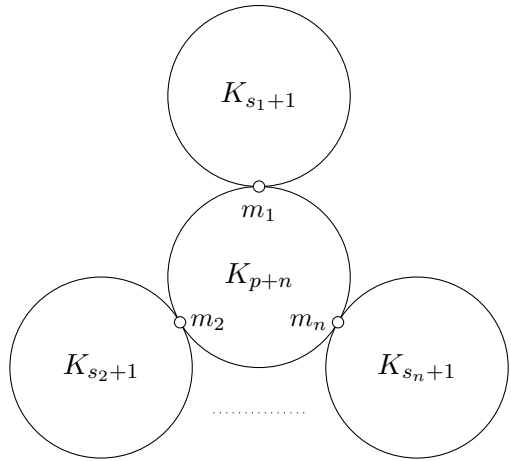
\begin{figure}[h]
\begin{tikzpicture}[scale=1.2]
    \draw (0, 2) circle (1) node {$K_{s_1+1}$}; 
    \draw (-1.732, -1) circle (1) node {$K_{s_2+1}$}; 
    \draw (1.732, -1) circle (1) node {$K_{s_n+1}$}; 

    \draw (0, 0) circle (1) node {$K_{p+n}$};

    \node[draw, circle, fill=white, inner sep=1.5pt] at (0, 1) {}; 
    \node[] at (0,0.7) {\small $m_1$};
    \node[draw, circle, fill=white, inner sep=1.5pt] at (-0.866, -0.5) {}; 
    \node[right] at (-0.866, -0.5) {\small $m_2$};
    \node[draw, circle, fill=white, inner sep=1.5pt] at (0.866, -0.5) {}; 
    \node[left] at (0.866, -0.5) {\small $m_n$};
    \draw[dotted] (-0.5,-1.5)--(0.5,-1.5);
\end{tikzpicture}
\caption{The graph $G_{s_1,\ldots,s_n,p}$}
\label{graphG}
\end{figure}

Let $M$ be a matroid with the ground set $E$.
For a nonempty subset $A\subset E$ and $a\in A$, we define the maps $f_{A,a}:\RR^E\to \RR^E$ and $g_A:\RR^E\to \RR^E$ as follows:
\begin{itemize}
    \item For $x\in \RR^E$ and $e\in E$, let 
    $$(f_{A,a}(x))(e):=\begin{cases}
        1-\sum_{i\in A}x(i) &\text{if }e=a \\
        x(e) &\text{otherwise}.
    \end{cases}$$
    \item For $x\in \RR^E$ and $e\in E$, let
        $$(g_A(x))(e):=\begin{cases}
        1-x(e) &\text{if }e\in A \\
        x(e) &\text{otherwise}.
        \end{cases}$$
\end{itemize}
Moreover, for $e\in E$, let $\pi_e : \RR^E\to \RR^{E\setminus\set{e}}$ denote the projection which ignores the $e$-th coordinate, i.e., for $x\in \RR^E$, $\pi_e(x)$ is the restriction $x|_{E\setminus\set{e}}$ of $x$ to $E\setminus{\set{e}}$.

\begin{lem}\label{lem:equiBS}
For $n\in \ZZ_{>0}$, $s_1,\ldots,s_n\in \ZZ_{>0}$ and $p\in \ZZ_{\ge 0}$, the base polytope $B(M(\Bc_{s_1,\ldots,s_n,p}))$ is unimodularly equivalent to the stable set polytope $\Stab_{G_{s_1,\ldots,s_n,p}}$.  
\end{lem}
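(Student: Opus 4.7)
The plan is to build the unimodular equivalence as an explicit composition of the three elementary operations $f_{A,a}$, $g_A$, and $\pi_e$ set up just before the lemma. Writing $F_i:=\{e_{i,1},\ldots,e_{i,s_i+1}\}$ for the $i$-th bundle and $A:=\{\epsilon_0,\ldots,\epsilon_{p-1}\}$, and identifying $V(G_{s_1,\ldots,s_n,p})$ with $E(\Bc_{s_1,\ldots,s_n,p})\setminus\{\epsilon_p\}$ by $v_{i,j}\leftrightarrow e_{i,j}$ for $j\in[1,s_i]$, $m_i\leftrightarrow e_{i,s_i+1}$, and $u_k\leftrightarrow \epsilon_{k-1}$ for $k\in[1,p]$, I define
\[
\Phi := \pi_{\epsilon_p}\circ g_A\circ f_{F_n,e_{n,s_n+1}}\circ\cdots\circ f_{F_1,e_{1,s_1+1}}.
\]
Unwinding, this reads $\Phi(x)(v_{i,j})=x(e_{i,j})$, $\Phi(x)(m_i)=1-\sum_{j=1}^{s_i+1}x(e_{i,j})$, and $\Phi(x)(u_k)=1-x(\epsilon_{k-1})$.

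Each $f_{F_i,e_{i,s_i+1}}$ and $g_A$ is a unimodular self-map of $\RR^{E(\Bc)}$, so the first verification is that $\pi_{\epsilon_p}$ becomes a bijection after restriction to the affine hull of $B(M(\Bc_{s_1,\ldots,s_n,p}))$: this holds because that polytope lies in $\{x:\sum_e x(e)=n+p\}$, from which $x(\epsilon_p)$ is determined by the remaining coordinates and an integer inverse of $\Phi$ can be written out explicitly. The core of the proof is then a vertex-level matching. Spanning trees of $\Bc_{s_1,\ldots,s_n,p}$ split into two types: (a) those skipping one bundle $F_i$ (which must then use all of $\epsilon_0,\ldots,\epsilon_p$ plus one edge from each of the remaining bundles), and (b) those skipping one path edge $\epsilon_\ell$ (which use exactly one edge from each bundle). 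For a type-(a) basis $B$ parametrized by $(i,(j_k)_{k\neq i})$ I compute that $\Phi(\chi_B)$ is the indicator of $\{m_i\}\cup\{v_{k,j_k}:k\neq i,\,j_k\leq s_k\}$; for a type-(b) basis parametrized by $(\ell,(j_i)_i)$ it is the indicator of $\{v_{i,j_i}:j_i\leq s_i\}\cup\{u_{\ell+1}\}$ if $\ell\leq p-1$, and of $\{v_{i,j_i}:j_i\leq s_i\}$ if $\ell=p$. Each of these sets meets every clique $V(K_{s_i+1})$ and $V(K_{p+n})$ in at most one vertex, hence is a stable set of $G_{s_1,\ldots,s_n,p}$. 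Conversely, every stable set $T$ of $G_{s_1,\ldots,s_n,p}$ is recovered uniquely from this recipe by looking at the intersection $T\cap V(K_{p+n})\in\{\emptyset,\{m_i\},\{u_k\}\}$, which specifies whether to take a type-(b) basis skipping $\epsilon_p$, a type-(a) basis skipping $F_i$, or a type-(b) basis skipping $\epsilon_{k-1}$, respectively.

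The principal obstacle is the case-by-case bookkeeping in the vertex bijection, made slightly awkward by the asymmetric roles of $\epsilon_p$ (projected out) and $\epsilon_0,\ldots,\epsilon_{p-1}$ (flipped by $g_A$). The map $\Phi$ itself is essentially forced by comparing facet structures: the bundle flacet inequalities $\sum_j x(e_{i,j})\leq 1$ correspond to the clique inequalities of $K_{s_i+1}$ in $\Stab_{G_{s_1,\ldots,s_n,p}}$, the non-negativities on bundle edges correspond to non-negativities on $v_{i,j}$ and $m_i$, the singleton flacet inequalities $x(\epsilon_{k-1})\leq 1$ for $k\in[1,p]$ correspond to $x(u_k)\geq 0$, and the remaining flacet $x(\epsilon_p)\leq 1$ together with the hyperplane equation $\sum_e x(e)=n+p$ collapses (after substitution) to the large clique inequality $\sum_i x(m_i)+\sum_k x(u_k)\leq 1$ for $K_{p+n}$.
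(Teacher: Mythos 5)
Your proof is correct and follows essentially the same route as the paper: the map $\Phi$ you construct agrees coordinate-by-coordinate with the paper's $f=\pi_{\epsilon_p}\circ f_{E_n,e_{n,s_n+1}}\circ\cdots\circ f_{E_1,e_{1,s_1+1}}\circ g_E$ (the reordering of $g$ relative to the $f$'s and the omission of $\epsilon_p$ from the flipped set are immaterial, since these operations act on disjoint coordinates and $\epsilon_p$ is projected away), and your two-type description of the spanning trees and the resulting vertex bijection with stable sets is exactly the paper's computation. Your added remarks on why $\pi_{\epsilon_p}$ is invertible on the affine hull and on surjectivity onto all stable sets only make explicit points the paper leaves implicit.
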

\begin{proof}
    For $i\in [1,n]$ and $k_i\in [1,s_i+1]$, let $T_{k_1,\ldots,k_n}:=\set{e_{1,k_1},\ldots,e_{n,k_n},\epsilon_0,\ldots,\epsilon_p}$.
    In addition, let $E_i:=\set{e_{i,1},\ldots,e_{i,s_i+1}}$ for each $i\in [1,n]$ and let $E:=\set{\epsilon_0,\ldots,\epsilon_p}$.
Then any spanning forest of $\Bc_{s_1,\ldots,s_n,p}$ can be written as $T_{k_1,\ldots,k_n}\setminus\set{e}$ for some $e\in T_{k_1,\ldots,k_n}$.
We set 
$$f:=\pi_{\epsilon_p}\circ f_{E_n,e_{n,s_n+1}}\circ \cdots \circ f_{E_1,e_{1,s_1+1}}\circ g_E.$$ Then we can see that 
 \begin{equation}\label{equ:B}
f(\chi_{T_{k_1,\ldots,k_i}\setminus\set{e}})=\begin{cases} \chi_{\set{e_{1,k_1},\ldots,\overset{i}{\check{e_{i,s_i+1}}},\ldots,e_{n,k_n}}\setminus\set{e_{1,s_1+1},\ldots,\widehat{e_{i,s_i+1}},\ldots,e_{n,s_n+1}}} & \text{ if } e=e_{i,k_i} \text{ for some $i$} \\ \chi_{\set{e_{1,k_1},\ldots,e_{n,k_n},\epsilon_k}\setminus\set{e_{1,s_1+1},\ldots,e_{n,s_n+1},\epsilon_p}} & \text{ if } e=\epsilon_k \text{ for some $k$},\end{cases}
\end{equation}
where $\widehat{\;}$ indicates an element to be omitted.
On the other hand, we consider the bijection $\phi : E(\Bc_{s_1,\ldots,s_n,p})\setminus\set{\epsilon_p} \to V(G_{s_1,\ldots,s_n,p})$ defined by the following correspondence:
\begin{align*}
    &e_{i,k_i}\mapsto v_{i,k_i} \quad (i\in [1,n] \text{ and }k_i\in [1,s_i]), &e_{i,s_i+1}\mapsto m_i \quad (i\in [1,n] \text{ and }k_i\in [1,s_i]), &\; &\; \\ &\epsilon_i\mapsto u_{i+1} \quad (i\in [0,p-1]).
\end{align*}
This bijection induces the one-to-one correspondence between the characteristic vectors described in (\ref{equ:B}) and the characteristic vectors of the stable sets of $G_{s_1,\ldots,s_n,p}$ as follows:
\begin{equation*}
\begin{split}
   &\chi_{\set{e_{1,k_1},\ldots,\overset{i}{\check{e_{i,s_i+1}}},\ldots,e_{n,k_n}}\setminus\set{e_{1,s_1+1},\ldots,\widehat{e_{i,s_i+1}},\ldots,e_{n,s_n+1}}} \mapsto 
   \chi_{\set{v_{1,k_1},\ldots,\overset{i}{\check{m_i}},\ldots,v_{n,k_n}}\setminus\set{m_1,\ldots,\widehat{m_i},\ldots,m_n}}, \\
   &\chi_{\set{e_{1,k_1},\ldots,e_{n,k_n},\epsilon_k}\setminus\set{e_{1,s_1+1},\ldots,e_{n,s_n+1},\epsilon_p}} \mapsto \begin{cases} \chi_{\set{v_{1,k_1},\ldots,v_{n,k_n},u_{k+1}}\setminus\set{m_1,\ldots,m_n}} &\text{ if } k\neq p \\
   \chi_{\set{v_{1,k_1},\ldots,v_{n,k_n}}\setminus\set{m_1,\ldots,m_n}} &\text{ if } k= p,\end{cases}
   \end{split}
\end{equation*}
where we let $v_{i,s_i+1}=m_i$ for each $i\in [1,n]$.
This shows that $B(M(\Bc_{s_1,\ldots,s_n,p}))$ is unimodularly equivalent to $\Stab_{G_{s_1,\ldots,s_n,p}}$.

\end{proof}

\subsection{The cases $n=0$ or $1$}

First, we discuss the relationship in the cases $n=0$ or 1.

\begin{prop}\label{prop:n=01}
    We have the following:
    \begin{enumerate}[\normalfont(1)]
        \item ${\bf MI}_0={\bf GMB}_0={\bf Stab}_0$.
        \item ${\bf MI}_1\subsetneq {\bf GMB}_1\subsetneq {\bf Stab}_1$.
    \end{enumerate}
\end{prop}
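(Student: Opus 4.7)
The plan is to combine the classifications established in Theorems~\ref{thm: classifyIM} and \ref{thm:smallmultibasepolytope} with the facet description of stable set polytopes of perfect graphs recalled in Section~\ref{sec:relation}, and then to exhibit one small counterexample that witnesses the strict inclusion at the $n=1$ level.

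For (1), I would first apply Theorem~\ref{thm: classifyIM}(1) to identify ${\bf MI}_0$ with the classes of $P(U_{1,n})=\conv\{0,e_1,\dots,e_n\}$ for $n\ge 1$, each of which is a unimodular $n$-simplex. Next, Theorem~\ref{thm:smallmultibasepolytope}(1) identifies ${\bf GMB}_0$ with $B(M(\Ac_{s+1}))=B(U_{1,s+1})=\conv\{e_1,\dots,e_{s+1}\}$ for $s\ge 1$; projecting out one coordinate shows that this is again a unimodular simplex, of dimension $s$. For ${\bf Stab}_0$, the facet description of $\Stab_G$ for a perfect graph $G$ (one inequality $x(i)\ge 0$ per vertex together with one clique inequality per maximal clique) gives that $\rank\Stab_G$ equals the number of maximal cliques of $G$ minus one; so rank zero forces $G$ to have a unique maximal clique, which must cover all vertices, hence $G=K_n$ and $\Stab_{K_n}=\conv\{0,e_1,\dots,e_n\}$ is again the standard $n$-simplex. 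Since unimodular simplices of a given dimension are unique up to unimodular equivalence, the three sets coincide dimension by dimension.

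For (2), the inclusion ${\bf MI}_1\subsetneq {\bf GMB}_1$ is immediate: Theorem~\ref{thm: classifyIM}(2) gives ${\bf MI}_1=\emptyset$, whereas ${\bf GMB}_1$ is nonempty by Theorem~\ref{thm:smallmultibasepolytope}(2), containing for instance $B(M(\Bc_{1,1}))$. The inclusion ${\bf GMB}_1\subset {\bf Stab}_1$ then follows at once from Lemma~\ref{lem:equiBS}, which yields $B(M(\Bc_{s,p}))\cong \Stab_{G_{s,p}}$ together with the fact that $G_{s,p}$ is perfect.

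The only non-formal step, and the one where the main bit of thought lies, is the strict inclusion ${\bf GMB}_1\subsetneq {\bf Stab}_1$. To witness it, I would take $G$ to be the graph on two isolated vertices; it is perfect and $\Stab_G=[0,1]^2$ has four facets in dimension two, hence rank $1$. A dimension count then rules out $[0,1]^2\in {\bf GMB}_1$: by Theorem~\ref{thm:smallmultibasepolytope}(2) every element of ${\bf GMB}_1$ is unimodularly equivalent to some $B(M(\Bc_{s,p}))$ with $s,p\ge 1$, and Lemma~\ref{facets:base} applied to the connected graphic matroid $M(\Bc_{s,p})$ gives dimension $|E(\Bc_{s,p})|-1=s+p+1\ge 3$. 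Hence $[0,1]^2\in {\bf Stab}_1\setminus {\bf GMB}_1$, completing the proof.
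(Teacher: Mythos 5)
Your argument for part (1) is sound and essentially the paper's: rank $0$ forces connectedness (a product has rank at least $1$), the classification theorems then reduce everything to unimodular simplices, and the three classes coincide dimension by dimension. The problem is part (2), where your proof rests on a misreading of the classification theorems. Theorem~\ref{thm: classifyIM} and Theorem~\ref{thm:smallmultibasepolytope} are stated only for \emph{connected} matroids (resp.\ $2$-connected graphs), whereas ${\bf MI}_1$ and ${\bf GMB}_1$ are defined over all matroid independence polytopes and all graphic matroid base polytopes. Since $\rank (P\times Q)=\rank P+\rank Q+1$, a direct sum of two rank-$0$ pieces produces a rank-$1$ polytope. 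In particular ${\bf MI}_1\neq\emptyset$: for instance $P(U_{1,1}\oplus U_{1,1})=[0,1]^2$ has $4$ facets in dimension $2$, hence rank $1$. So your claim that ${\bf MI}_1=\emptyset$ is false, and although the inclusion ${\bf MI}_1\subsetneq{\bf GMB}_1$ is still true, it now requires an actual argument that some element of ${\bf GMB}_1$ (the paper uses $B(M(\Bc_{1,1}))$) is not an independence polytope; the paper does this by a dimension count combined with a computer check.

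The same oversight breaks your separating example for ${\bf GMB}_1\subsetneq{\bf Stab}_1$. The square $[0,1]^2$ \emph{is} in ${\bf GMB}_1$: it is unimodularly equivalent to $B(M(\Ac_2\sqcup\Ac_2))=B(M(\Ac_2))\times B(M(\Ac_2))$, a product of two segments, and your dimension bound ``every element of ${\bf GMB}_1$ has dimension $s+p+1\ge 3$'' only applies to the connected case. So $[0,1]^2$ witnesses nothing. A correct proof must first enumerate all rank-$1$ members of each class including the disconnected contributions --- ${\bf MI}_1$ consists of products of two simplices, ${\bf GMB}_1$ additionally contains the $B(M(\Bc_{s,p}))$, and ${\bf Stab}_1$ contains $\Stab_{K_{s_1}\sqcup K_{s_2}}$ together with further polytopes --- and then exhibit a perfect graph whose stable set polytope of rank $1$ is equivalent to none of these; the paper uses the graph on $\{1,2,3,4\}$ with edges $\{1,2\},\{1,3\},\{2,3\},\{2,4\},\{3,4\}$ and verifies non-equivalence computationally. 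As it stands, your part (2) does not establish either strict inclusion.
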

\begin{proof}
    (1)  By Theorems~\ref{thm: classifyIM} and \ref{thm:smallmultibasepolytope}, any independence (resp. base) polytope with rank 0 is unimodularly equivalent to $P(M(\Ac_{s}))$ (resp. $B(M(\Ac_{s+1}))$) for some $s\in \ZZ_{>0}$.
    Similarly, any stable set polytope with rank 0 is unimodularly equivalent to $\Stab_{K_t}$ for some $t\in \ZZ_{>0}$.
    We can see that 
    $P(M(\Ac_{s}))=\pi_{e_1}(B(M(\Ac_{s+1})))=\Cc_{I_s}$ for any $s\in \ZZ_{>0}$,
    which implies that ${\bf MI}_0={\bf GMB}_0={\bf Stab}_0$.
    (2) It follows from Theorems~\ref{thm: classifyIM} and \ref{thm:smallmultibasepolytope} that any independence (resp. base) polytope with rank 1 is unimodularly equivalent to $P(M(\Ac_{s_1}\sqcup\Ac_{s_2}))$ (resp. $B(M(\Ac_{s_1+1}\sqcup\Ac_{s_2+1}))$ or $B(M(\Bc_{s_1,p}))$) for some $s_1,s_2\in \ZZ_{>0}$ (resp. $s_1,s_2,p\in \ZZ_{>0}$).
    As we observed in (1), $P(M(\Ac_{s_1}\sqcup\Ac_{s_2}))$, $B(M(\Ac_{s_1+1}\sqcup\Ac_{s_2+1}))$ and $\Stab_{K_{s_1}\sqcup K_{s_2}}$ are unimodularly equivalent.
    Moreover, by Lemma~\ref{lem:equiBS}, $B(M(\Bc_{s_1,p}))$ and $\Stab_{G_{s_1,p}}$ are unimodularly equivalent for any $s_1,p\in \ZZ_{>0}$.
    These imply that ${\bf MI}_1\subset {\bf GMB}_1\subset {\bf Stab}_1$.
    We can see that $B(M(\Bc_{1,1}))\notin {\bf MI}_1$. Indeed, if there exist positive integers $s_1$ and $s_2$ such that $P(M(\Ac_{s_1}\sqcup\Ac_{s_2}))$ is unimodularly equivalent to $B(M(\Bc_{1,1}))$, then we have $s_1+s_2=3$ since $\dim P(M(\Ac_{s_1}\sqcup\Ac_{s_2}))=\dim B(M(\Bc_{1,1}))=3$.
    Thus, we may assume that $s_1=1$ and $s_2=2$, and {\tt MAGMA} confirms that $P(M(\Ac_{s_1}\sqcup\Ac_{s_2}))$ is not unimodularly equivalent to $B(M(\Bc_{1,1}))$.

    Let $G$ be the graph on the vertex set $\set{1,2,3,4}$ with the edge set $$\set{\set{1,2},\set{1,3},\set{2,3},\set{2,4},\set{3,4}}.$$
    Similar to the discussion above, we can check by {\tt MAGMA} that $\Stab_G\in {\bf Stab}_1$ and there is no base polytope which is unimodularly equivalent to $\Stab_G$, that is, $\Stab_G\notin {\bf GMB}_1$.
    Therefore, we have ${\bf MI}_1\subsetneq {\bf GMB}_1\subsetneq {\bf Stab}_1$
\end{proof}

\subsection{The case $n=2$}

Next, we discuss the relationship in the case $n=2$.

\begin{prop}\label{prop:n=2}
    We have the following:
    \begin{enumerate}[\normalfont(1)]
        \item ${\bf MI}_2\subsetneq {\bf GMB}_2\subsetneq {\bf Stab}_2$.
        \item For any $\Sc \in \set{{\bf MI}_2,{\bf GMB}_2}$, there is no inclusion between $\Sc$ and ${\bf Edge}_2$.
    \end{enumerate}
\end{prop}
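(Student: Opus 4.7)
For part~(1), I would first establish the inclusion chain ${\bf MI}_2 \subset {\bf GMB}_2 \subset {\bf Stab}_2$. By Theorem~\ref{thm: classifyIM} and the product formula $\rank(P \times Q) = \rank P + \rank Q + 1$, any rank-$2$ independence polytope must decompose as a product of three rank-$0$ simplices $P(U_{1,s_1}) \times P(U_{1,s_2}) \times P(U_{1,s_3})$: no other decomposition gives total rank~$2$, because every connected matroid contributes rank $0$ or $\ge 3$ to its independence polytope. Each factor $P(U_{1,s_i})$ is unimodularly equivalent to $B(M(\Ac_{s_i+1}))$, so the product lies in ${\bf GMB}_2$. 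Similarly, by Theorem~\ref{thm:smallmultibasepolytope}(3) together with the analogous product analysis for disconnected graphic matroids, every rank-$2$ graphic matroid base polytope is either $B(M(\Bc_{s_1,s_2,p}))$ or a product of three simplices; Lemma~\ref{lem:equiBS} identifies the former with $\Stab_{G_{s_1,s_2,p}}$, while a product of simplices equals $\Stab_{K_{s_1+1} \sqcup K_{s_2+1} \sqcup K_{s_3+1}}$, both of which are stable set polytopes of perfect graphs.

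For strictness, I would use direct witnesses. Take $B(M(\Bc_{1,1,0})) \in {\bf GMB}_2$: it has dimension~$4$ and $8$ vertices (the spanning trees of a triangle with two doubled sides), whereas the only $4$-dimensional member of ${\bf MI}_2$ is $P(U_{1,1} \oplus U_{1,1} \oplus U_{1,2})$ with $2 \cdot 2 \cdot 3 = 12$ vertices, so the two polytopes are not unimodularly equivalent. For ${\bf GMB}_2 \subsetneq {\bf Stab}_2$, I take $G = K_{1,3}$ (bipartite, hence perfect): $\Stab_{K_{1,3}}$ has dimension~$4$ with $4 + 3 = 7$ facets (so rank~$2$) and $9$ stable sets as vertices, and this count matches neither of the two $4$-dimensional members of ${\bf GMB}_2$ identified above.

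For part~(2), the inclusion ${\bf MI}_2 \subset {\bf GMB}_2$ from~(1) reduces each non-inclusion to a single witness in the appropriate direction. For ${\bf Edge}_2 \not\subset {\bf GMB}_2$ (and hence $\not\subset {\bf MI}_2$), Theorem~\ref{thm:relationOSE} guarantees an element of ${\bf Edge}_2 \setminus {\bf Stab}_2$, which lies outside ${\bf GMB}_2$ by part~(1). Conversely, I take the cube $[0,1]^3 = P(U_{1,1} \oplus U_{1,1} \oplus U_{1,1}) \in {\bf MI}_2$ and show it is not unimodularly equivalent to any $P_G$: the identity $\dim P_G = |V(G)| - b(G) - 1 = 3$ forces $|V(G)| - b(G) = 4$, while having $8$ vertices forces $G$ to contain at least $8$ distinct non-loop edges; a case analysis on $b(G) \in \{0,1,2,\ldots\}$, combined with the standard maximum-edge bounds $\lfloor m^2/4 \rfloor$ on bipartite and $\binom{m}{2}$ on non-bipartite components (and the constraint $m \ge 3$ for the latter), exhausts all possibilities with maximum achievable edge count~$6 < 8$.

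The main obstacle I expect is the strictness of ${\bf GMB}_2 \subsetneq {\bf Stab}_2$: one must select a witness whose vertex count distinguishes it simultaneously from $B(M(\Bc_{s_1,s_2,p}))$ and from the product families, and the choice $\Stab_{K_{1,3}}$ succeeds only because its $9$ vertices fall strictly between the values $8$ and $12$ of the two dimension-$4$ members of ${\bf GMB}_2$. A secondary concern is making the case analysis for $[0,1]^3 \notin {\bf Edge}_2$ tight; this can alternatively be shortened by invoking the explicit classification of edge polytopes of small rank in \cite{matsushita2022three}.
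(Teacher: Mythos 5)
Your overall strategy matches the paper's for the inclusions (reduce to connected components, apply Theorems~\ref{thm: classifyIM} and \ref{thm:smallmultibasepolytope} and Lemma~\ref{lem:equiBS}), but where the paper falls back on {\tt MAGMA} for the strictness statements and for $[0,1]^3\notin{\bf Edge}_2$, you substitute elementary dimension-and-vertex counts; that is a genuinely more self-contained route and is a real improvement where it is carried out correctly. Your use of Theorem~\ref{thm:relationOSE} to produce an element of ${\bf Edge}_2\setminus{\bf Stab}_2$ is exactly the paper's argument, and the edge-count bound showing the $3$-cube is not an edge polytope is sound (modulo the standing convention that the graphs have no loops, so that the $8$ vertices force $8$ distinct edges while $|V(G)|-b(G)=4$ caps the edge count at $6$).

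There is, however, one genuine gap: your enumeration of rank-$2$ graphic matroid base polytopes is incomplete. Unlike the independence-polytope case, where connected matroids contribute rank $0$ or $\ge 3$ so that rank $2$ forces three rank-$0$ factors, a \emph{connected} graphic matroid can have base polytope of rank $1$ (namely $B(M(\Bc_{s,p}))$). Hence a two-component decomposition with ranks $0$ and $1$, i.e.\ $B(M(\Ac_{a+1}))\times B(M(\Bc_{s,p}))$, also has rank $2$, and $B(M(\Bc_{s,p}))$ is not a simplex, so this family is not covered by ``$B(M(\Bc_{s_1,s_2,p}))$ or a product of three simplices.'' This matters in two places. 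First, in the inclusion ${\bf GMB}_2\subset{\bf Stab}_2$ you must also handle this case; it is easily repaired via $B(M(\Ac_{a+1}))\times B(M(\Bc_{s,p}))\cong \Stab_{K_a}\times\Stab_{G_{s,p}}\cong\Stab_{K_a\sqcup G_{s,p}}$. Second, and more seriously for your write-up, the $4$-dimensional members of ${\bf GMB}_2$ are \emph{three}, not two: $B(M(\Bc_{1,1,0}))$ with $8$ vertices, $B(M(\Ac_2))\times B(M(\Bc_{1,1}))$ with $2\cdot 5=10$ vertices, and the product of three simplices with $12$ vertices. Your witness $\Stab_{K_{1,3}}$ with $9$ vertices still escapes all three, so the conclusion ${\bf GMB}_2\subsetneq{\bf Stab}_2$ survives, but as written the justification rests on an incomplete list and needs to be patched before it is a proof.
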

\begin{proof}
    (1) First, we show that ${\bf MI}_2\subset {\bf GMB}_2$. Let $M$ be a matroid with $\rank P(M)=2$. If we can write $M$ as the direct sum of two matroids $M_1$ and $M_2$ with $\rank P(M_i)\le 1$ for $i=1,2$, then there exist matroids $M'_1$ and $M'_2$ such that $B(M'_i)$ is unimodularly equivalent to $P(M_i)$ for each $i=1,2$ from Proposition~\ref{prop:n=01}, and hence $P(M)\in {\bf GMB}_2$.
    Thus, we may assume that $M$ is connected. However, in this case, there is no such matroid $M$ by Theorem~\ref{thm: classifyIM}, which shows that ${\bf MI}_2\subset {\bf GMB}_2$.

    Next, we prove that ${\bf GMB}_2\subset {\bf Stab}_2$.
    Let $M'$ be a matroid with $\rank B(M')=2$.
    For the same reason as above, we may assume that $M'$ is connected.
    Then we can see that $B(M')$ is unimodularly equivalent to $B(M(\Bc_{s_1,s_2,p}))$ from Theorem~\ref{thm:smallmultibasepolytope} (3) and that $B(M(\Bc_{s_1,s_2,p}))$ is unimodularly equivalent to $\Stab_{G_{s_1,s_2,p}}$, which implies that ${\bf GMB}_2\subset {\bf Stab}_2$.

    Finally, we show that the above inclusions are proper.
    Let $M=M(\Bc_{1,1,0})$ and let $G$ be the graph on the vertex set $\set{1,2,3,4,5}$ with the edge set $$\set{\set{1,2},\set{1,3},\set{2,3},\set{2,4},\set{3,4}}.$$
    Then we have $B(M)\notin {\bf MI}_2$ and $\Stab_G\notin {\bf GMB}_2$.
    Their proofs can be performed in the similar way to Proposition~\ref{prop:n=01}. 

    \medskip

    (2) Since ${\bf MI}_2\subset {\bf GMB}_2$, it is enough to show that there exist graphs $G$ and $H$ with $\rank P_G=\rank P(M(H))=\rank B(M(H))=2$ such that $P_G \not\in {\bf GMB}_2$ and $P(M(H)) \not\in {\bf Edge}_2$, respectively.
    Let $G$ be the graph on the vertex set $V(G)=\set{1,\ldots,7}$ with the edge set
    $$E(G)=\set{\set{1,2},\set{1,7},\set{2,6},\set{3,4},\set{4,7},\set{5,6},\set{5,7},\set{6,7}},$$ which is the same as the graph in \cite[Figure~17]{matsushita2022three}, and let $H=\bigcup_{i=1}^3\Ac_1$.
    It is shown in \cite[Section~5.2]{matsushita2022three} that $P_G\in {\bf Edge}_2\setminus {\bf Stab}_2$.
    Since ${\bf GMB}_2\subset {\bf Stab}_2$, we have $P_G\notin {\bf GMB}_2$.
    Moreover, we can check by using {\tt MAGMA} that $P(M)\in {\bf MI}_2\setminus{\bf Edge}_2$. 
    
\end{proof}

\subsection{The case $n=3$}

Finally, we discuss the relationship in the case $n=3$.
Before that, we provide the following lemmas:

\begin{lem}\label{lem:CcongO}
    For $t,q\in \ZZ_{>0}$ and $s,p\in \ZZ_{\ge 0}$, the base polytope $B(M(\Cc_{s,t,p,q}))$ is unimodularly equivalent to the order polytope $\Oc_{W_{s,t,p,q}}$
\end{lem}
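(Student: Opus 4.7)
The plan is to exhibit an explicit unimodular affine equivalence $B(M(\Cc_{s,t,p,q}))\cong \Oc_{W_{s,t,p,q}}$ by first reducing to the chain polytope. I would observe that $W_{s,t,p,q}$ contains no X-shape subposet: the only elements of Hasse-degree greater than one are the "hubs" $\mu_1,\mu_2,\mu_3$, and each of them branches in only one direction (so no element has both $\geq 2$ elements below and $\geq 2$ elements above). Theorem~\ref{X} then yields $\Oc_{W_{s,t,p,q}}\cong \Cc_{W_{s,t,p,q}}$, reducing the problem to showing $B(M(\Cc_{s,t,p,q}))\cong \Cc_{W_{s,t,p,q}}$.

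Next, I would determine the facets of $B(M(\Cc_{s,t,p,q}))$ via Lemma~\ref{facets:base}. Write $E_1,E_2$ for the two multi-edge sets and $P_1,P_2$ for the two path edge sets. The deletable edges are exactly those in $E_1\cup E_2$; every path edge is contractible (giving a singleton flacet), and the remaining flacets are $E_1$ (inducing $\Ac_{s+1}$), $E_2\cup P_2$ (the induced 2-connected subgraph on $\{v_2,v_3,w_1,\ldots,w_q\}$), and $E_1\cup E_2\cup P_1$ (the induced 2-connected subgraph on $\{v_1,v_2,v_3,u_1,\ldots,u_p\}$); one verifies $G/F$ is 2-connected in each case. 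This yields $s+t+p+q+7$ facets, matching the facet count of $\Cc_{W_{s,t,p,q}}$ (one non-negativity facet per element of $W_{s,t,p,q}$ plus one chain-sum facet for each of the four maximal chains $C_\alpha,C_\beta,C_\gamma,C_\delta$).

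Then I would construct the map. Using the rank equation $\sum_{e}x(e)=p+q+2$ to eliminate $x(e_{2,t+1})$, define $\phi$ on the remaining coordinates by
\begin{align*}
y_{\mu_1}&=1-\sum_{e\in E_1}x(e),\\
y_{\alpha_k}&=x(e_{1,k+1}),\quad k\in[1,s],\\
y_{\mu_2}&=\sum_{e\in E_1\cup P_1}x(e)-(p+1),\\
y_{\beta_\ell}&=1-x(\epsilon_{1,\ell}),\quad \ell\in[1,p],\\
y_{\gamma_k}&=x(e_{2,k}),\quad k\in[1,t],\\
y_{\mu_3}&=\sum_{e\in P_2}x(e)-q,\\
y_{\delta_\ell}&=1-x(\epsilon_{2,\ell}),\quad \ell\in[1,q].
\end{align*}
A direct computation (invoking the rank equation where needed) gives the four chain-sum identities $\sum_{C_\alpha}y=1-x(e_{1,1})$, $\sum_{C_\beta}y=x(\epsilon_{1,0})$, $\sum_{C_\gamma}y=1-x(e_{2,t+1})$, and $\sum_{C_\delta}y=x(\epsilon_{2,0})$; together with the defining formulas these show that each of the $s+t+p+q+7$ facet inequalities of $\Cc_{W_{s,t,p,q}}$ pulls back to a unique facet inequality of $B(M(\Cc_{s,t,p,q}))$. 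Reordering the basis by chain-block ($\alpha,\beta,\gamma,\delta$ together with the three $\mu$-rows), the coefficient matrix of $\phi$ becomes block upper-triangular with $\pm1$ diagonal entries, so $\phi$ is unimodular; a vertex-level check (spanning trees fall into five types parameterized by which of $E_1,E_2$ is used and which path edges are removed, with the five type-counts $1,(s+1)(p+1),(s+1)(q+1),(t+1)(q+1),(s+1)(t+1)(p+1)(q+1)$ matching the antichains of $W_{s,t,p,q}$ stratified by $(\mathbf{1}_{\mu_1\in A},\mathbf{1}_{\mu_2\in A},\mathbf{1}_{\mu_3\in A})$) completes the argument.

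The main obstacle will be choosing the correct formulas for $y_{\mu_2}$ and $y_{\mu_3}$, which are simultaneously constrained by the two "big" flacets $E_1\cup E_2\cup P_1$ and $E_2\cup P_2$; once these are pinned down by matching the facet $y_{\mu_2}\ge 0$ with $\sum_{E_1\cup E_2\cup P_1}x\le p+2$ (equivalently, $\sum_{P_2}x\ge q$) and $y_{\mu_3}\ge 0$ with $\sum_{E_2\cup P_2}x\le q+1$ (after using the rank equation), verifying the chain-sum identities and the resulting unimodularity is routine bookkeeping.
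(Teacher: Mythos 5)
Your proposal is correct in substance and follows the same basic route as the paper: both arguments reduce $\Oc_{W_{s,t,p,q}}$ to the chain polytope $\Cc_{W_{s,t,p,q}}$ via Theorem~\ref{X} (the paper does this silently at the end; you verify the no-X-shape condition, which is a worthwhile addition) and then exhibit an explicit unimodular affine map from $B(M(\Cc_{s,t,p,q}))$. The difference is in how the map is presented and verified: the paper writes the map as a composition of the elementary operations $g_A$, $f_{A,a}$, $h$ and a projection, enumerates the spanning trees into five types, and checks that their images are exactly the indicator vectors of the antichains of $W_{s,t,p,q}$; you instead give the map by explicit coordinate formulas and verify it facet-by-facet, with the vertex-type count as a supplementary check. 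I checked your formulas: the chain-sum identities hold, the linear part is block-triangular with determinant $\pm 1$, and the vertex images of all five spanning-tree types land on the correct antichains, so your map is a valid (indeed essentially equivalent) substitute for the paper's.

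Two slips worth flagging, neither fatal. First, the lemma allows $s=0$, and there your facet inventory is wrong: when $E_1=\{e_{1,1}\}$ is a single edge, $e_{1,1}$ is \emph{not} deletable (deleting it leaves $v_1$ of degree one), so $x(e_{1,1})\ge 0$ is not a facet; correspondingly $\{\mu_1\}$ is then not a maximal chain of $W_{0,t,p,q}$ (it sits inside $\mu_1\prec\beta_1\prec\cdots\prec\mu_2$), so the facet counts on both sides drop by one in tandem and your map still works, but the stated count $s+t+p+q+7$ and the claim ``the deletable edges are exactly $E_1\cup E_2$'' need the caveat $s\ge 1$. Second, your closing paragraph swaps the two big flacets relative to your own formulas: with $y_{\mu_2}=\sum_{E_1\cup P_1}x-(p+1)$ and $y_{\mu_3}=\sum_{P_2}x-q$, the inequality $y_{\mu_2}\ge 0$ pulls back (via the rank equation) to $\sum_{E_2\cup P_2}x\le q+1$, while $y_{\mu_3}\ge 0$ pulls back to $\sum_{E_1\cup E_2\cup P_1}x\le p+2$, not the other way around. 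Fix the labels and add the $s=0$ caveat and the argument is complete.
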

\begin{proof}
Let $E_1:=\set{e_{1,i} ; i\in [1,s+1]}$, $E_2:=\set{e_{2,i} : i\in [1,t+1]}$, $\Ec_1:=\set{\epsilon_{1,i} : i\in [0,p]}$ and $\Ec_2:=\set{\epsilon_{2,i} : i\in [0,q]}$.
For $i\in [1,s+1]$ and $j\in [1,t+1]$, we set $T_{i,j}:=\{e_{1,i},e_{2,j}\}\cup \Ec_1\cup\Ec_2$.
Then a spanning forest of $\Cc_{s,t,p,q}$ is one of the following forms:
\begin{itemize}
    \item[(T1)] $T_{i,j}\setminus \set{e_{1,i},e_{2,j}}$;
    \item[(T2)] $T_{i,j}\setminus \set{e_{1,i},\epsilon_{2,l}}$ for some $l\in [0,q]$;
    \item[(T3)] $T_{i,j}\setminus \set{e_{2,j},\epsilon_{1,k}}$ for some $k\in [0,p]$;
    \item[(T4)] $T_{i,j}\setminus \set{e_{2,j},\epsilon_{2,l}}$ for some $l\in [0,q]$;
    \item[(T5)] $T_{i,j}\setminus \set{\epsilon_{1,k},\epsilon_{2,l}}$ for some $k\in [0,p]$ and $l\in [0,q]$.
\end{itemize}
We define the map $h : \RR^{E(\Cc_{s,t,p,q})} \to \RR^{E(\Cc_{s,t,p,q})}$ as follows:
$$(h(x))(e)=\begin{cases}
    x(e_{2,t+1})-\sum_{i\in E_2\setminus\set{e_{2,t+1}}}x(i) &\text{ if }e=e_{2,t+1} \\
    x(e) &\text{ otherwise. }
\end{cases}$$
In addition, let $f:=\pi_{\epsilon_{1,0}}\circ h \circ f_{\set{e_{2,t+1},\epsilon_{2,0}},e_{2,t+1}} \circ f_{\Ec_2,\epsilon_{2,0}}\circ f_{E_1,e_{1,s+1}}\circ g_{\Ec_2}\circ g_{E_1}$.
Then we can see that
\begin{equation}\label{equ:C}
\begin{split}
f(\chi_{T_{i,j}\setminus\set{e_{1,i},e_{2,j}}})&=\chi_{\set{e_{1,s+1},\epsilon_{2,0}}}, \\
f(\chi_{T_{i,j}\setminus \set{e_{1,i},\epsilon_{2,l}}})&=\chi_{\set{e_{1,s+1},e_{2,j},\epsilon_{2,l}}\setminus \set{e_{2,t+1},\epsilon_{2,0}}}, \\
f(\chi_{T_{i,j}\setminus \set{e_{2,j},\epsilon_{1,k}}})&=\chi_{\set{e_{1,i},\epsilon_{1,k},\epsilon_{2,0}}\setminus \set{e_{1,s+1},\epsilon_{1,0}}}, \\
f(\chi_{T_{i,j}\setminus \set{e_{2,j},\epsilon_{2,l}}})&=\chi_{\set{e_{1,i},e_{2,t+1},\epsilon_{2,l}}\setminus\set{e_{1,s+1},\epsilon_{2,0}}},\\ 
f(\chi_{T_{i,j}\setminus \set{\epsilon_{1,k},\epsilon_{2,l}}})&=\chi_{\set{e_{1,i},e_{2,j},\epsilon_{1,k},\epsilon_{2,l}}\setminus \set{e_{1,s+1},e_{2,t+1},\epsilon_{1,0},\epsilon_{2,0}}}.
\end{split}
\end{equation}
On the other hand, we consider the bijection $\phi : E(\Cc_{s,t,p,q})\setminus\set{\epsilon_{1,0}} \to W_{s,t,p,q})$ defined by the following correspondence:
\begin{align*}
    e_{1,i}\mapsto \alpha_{i} \quad &(i\in [1,s]), \quad &e_{1,s+1}\mapsto \mu_1, \quad &\; \quad &e_{2,j} \mapsto \gamma_j \quad &(j\in [1,t]), \\
    e_{2,t+1}\mapsto \mu_2, &\;  &\epsilon_{1,k}\mapsto \beta_{k} \quad &(k\in [1,p]), \quad &\epsilon_{2,l} \mapsto \delta_{l} \quad &(l\in [1,q]), \\
    \epsilon_{2,0}\mapsto \mu_3. &\; &\; &\; &\; &\;
\end{align*}
This bijection induces the one-to-one correspondence between the characteristic vectors described in (\ref{equ:C}) and the characteristic vectors of antichains of $W_{s,t,p,q}$.
Therefore, $B(M(\Cc_{s,t,p,q}))$ is unimodularly equivalent to $\Cc_{W_{s,t,p,q}}$, which is also unimodularly equivalent to $\Oc_{W_{s,t,p,q}}$ from Theorem~\ref{X}.
\end{proof}

\begin{lem}\label{lem:DcongE}
     For $s_1,s_2,s_3\in \ZZ_{>0}$, the base polytope $B(M(\Dc_{s_1,s_2,s_3}))$ is unimodularly equivalent to the edge polytope $P_{K_{s_1,s_2,s_3}}$.
\end{lem}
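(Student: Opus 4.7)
The plan is to exhibit an explicit unimodular equivalence by directly identifying the vertex sets of the two polytopes via a natural bijection on coordinates. The key observation is that a spanning tree of $\Dc_{s_1,s_2,s_3}$ consists of exactly two edges lying in two distinct ``bundles'' (the $s_1$ edges between $v_1,v_2$, the $s_2$ edges between $v_2,v_3$, or the $s_3$ edges between $v_3,v_1$), since choosing two edges from the same bundle would create a cycle on two vertices and thus fail to span the third vertex.

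First, following the notation in Section~\ref{subsec:graph}, label the edges of $\Dc_{s_1,s_2,s_3}$ as $e_{i,j}$ for $i\in\{1,2,3\}$ and $j\in [1,s_i]$, where $e_{i,j}$ runs between $v_i$ and $v_{i+1}$ (indices mod $3$). Write $V(K_{s_1,s_2,s_3})=V_1\sqcup V_2\sqcup V_3$ with $V_i=\{w_{i,1},\ldots,w_{i,s_i}\}$. Define the bijection $\phi\colon E(\Dc_{s_1,s_2,s_3})\to V(K_{s_1,s_2,s_3})$ by $\phi(e_{i,j})=w_{i,j}$. This induces a coordinate-permutation map $\RR^{E(\Dc_{s_1,s_2,s_3})}\to \RR^{V(K_{s_1,s_2,s_3})}$, which is a unimodular transformation.

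Next, I would verify that this coordinate identification sends vertices of $B(M(\Dc_{s_1,s_2,s_3}))$ bijectively onto vertices of $P_{K_{s_1,s_2,s_3}}$. By the observation above, every spanning tree of $\Dc_{s_1,s_2,s_3}$ has the form $B=\{e_{i,j},e_{i',j'}\}$ with $i\neq i'$, so its characteristic vector $\chi_B$ has exactly two $1$'s, located at coordinates $e_{i,j}$ and $e_{i',j'}$. Under $\phi$, this maps to the characteristic vector of $\{w_{i,j},w_{i',j'}\}$, which is precisely the characteristic vector of an edge of $K_{s_1,s_2,s_3}$ (since $w_{i,j}$ and $w_{i',j'}$ lie in distinct parts). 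Conversely, every edge of $K_{s_1,s_2,s_3}$ joins two vertices in distinct parts $V_i$ and $V_{i'}$, which corresponds under $\phi^{-1}$ to a pair $\{e_{i,j},e_{i',j'}\}$ that forms a spanning tree of $\Dc_{s_1,s_2,s_3}$. Taking convex hulls yields the desired unimodular equivalence.

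There is no real obstacle here; the statement essentially amounts to matching the combinatorial description of spanning trees of a triangle with multi-edges against the edge set of a complete tripartite graph, and the bijection is forced. The only care needed is to confirm that pairs of parallel edges (from the same bundle) never form a spanning tree, which is immediate from the fact that $\Dc_{s_1,s_2,s_3}$ has three vertices while two parallel edges span only two.
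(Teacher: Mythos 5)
Your proof is correct and takes essentially the same route as the paper: both identify each edge $e_{i,j}$ of $\Dc_{s_1,s_2,s_3}$ with a vertex of $K_{s_1,s_2,s_3}$ in part $V_i$, observe that spanning trees are exactly the pairs of edges from distinct bundles, and conclude that the vertex sets (hence the polytopes) coincide under this coordinate identification.
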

\begin{proof}
For each $i=1,2,3$, let $E_i:=\set{e_{i,k} : k\in [1,s_i]}$.
Then any spanning forest of $\Dc_{s_1,s_2,s_3}$ can be written as $\set{e_{i,k},e_{j,l}}$ for some $1\le i<j\le 3$, $k\in [1,s_i]$ and $l\in [1,s_j]$, which one-to-one corresponds to the edge $\set{k,l}$ of $K_{s_1,s_2,s_3}$ where $k\in V_i$ and $l\in V_j$.
This shows that $B(M(\Dc_{s_1,s_2,s_3}))$ coincides with $P_{K_{s_1,s_2,s_3}}$.
\end{proof}

\begin{prop}\label{prop:n=3}
     We have the following:
     \begin{enumerate}[\normalfont(1)]
         \item For any $\Sc \in \set{{\bf Order}_3,{\bf Stab}_3,{\bf Edge}_3,{\bf GMB}_3}$, there is no inclusion between $\Sc$ and ${\bf MI}_3$.
         In particular, we have ${\bf MI}_3 \not\subset {\bf Order}_3\cup{\bf Stab}_3\cup{\bf Edge}_3\cup{\bf GMB}_3$.
         \item For any $\Tc \in \set{{\bf Order}_3,{\bf Stab}_3,{\bf Edge}_3,{\bf MI}_3}$, there is no inclusion between $\Tc$ and ${\bf GMB}_3$.
         On the other hand, we have ${\bf GMB}_3\subsetneq {\bf Stab}_3\cup{\bf Edge}_3$.
     \end{enumerate}
\end{prop}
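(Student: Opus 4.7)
The plan is to combine the classifications of ${\bf MI}_3$ (Theorem~\ref{thm: classifyIM}) and ${\bf GMB}_3$ (Theorem~\ref{thm:smallmultibasepolytope}) with the unimodular identifications in Lemmas~\ref{lem:equiBS}, \ref{lem:CcongO} and \ref{lem:DcongE}, reducing every assertion to a comparison among explicit polytope families. First, I would establish the structural inclusion ${\bf GMB}_3\subset {\bf Stab}_3\cup {\bf Edge}_3$: by Theorem~\ref{thm:smallmultibasepolytope}, every element of ${\bf GMB}_3$ is one of $B(M(\Bc_{s_1,s_2,s_3,p}))$, $B(M(\Cc_{s,t,p,q}))$, or $B(M(\Dc_{s_1+1,s_2+1,s_3+1}))$. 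Lemma~\ref{lem:equiBS} places the first in ${\bf Stab}_3$ via the perfect graph $G_{s_1,s_2,s_3,p}$, and Lemma~\ref{lem:DcongE} places the third in ${\bf Edge}_3$, since the connected graph $K_{s_1+1,s_2+1,s_3+1}$ trivially satisfies the odd cycle condition. For the second family, Lemma~\ref{lem:CcongO} gives $\Oc_{W_{s,t,p,q}}$; a direct inspection shows that the only element of $W_{s,t,p,q}$ lying above two incomparable elements is the maximum $\mu_2$, so $W_{s,t,p,q}$ contains no X-shape subposet and Theorem~\ref{X} yields $\Oc_{W_{s,t,p,q}}\cong \Stab_{G(W_{s,t,p,q})}\in {\bf Stab}_3$.

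For the non-inclusion statements, the plan is to produce small explicit witnesses and verify non-equivalence with {\tt MAGMA}, following the pattern of the proofs of Propositions~\ref{prop:n=01} and \ref{prop:n=2}. My main candidate for (1) is $P(M(\Dc_{1,1,1}))=P(U_{2,3})$, a $3$-dimensional polytope with $7$ vertices and $7$ facets. By Lemma~\ref{facets:base}, $\dim B(M(G))=|E(G)|-1$, and every graph appearing in Theorem~\ref{thm:smallmultibasepolytope} has at least $6$ edges, so ${\bf GMB}_3$ contains no $3$-dimensional polytope and $P(U_{2,3})\notin {\bf GMB}_3$ purely by dimension. An elementary enumeration shows that no $3$-element poset or $3$-vertex perfect graph produces an order or stable set polytope of rank $3$, excluding ${\bf Order}_3$ and ${\bf Stab}_3$, while ${\bf Edge}_3$ is ruled out by a finite {\tt MAGMA} search over graphs with the matching vertex, edge, and bipartite-component data. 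The reverse non-inclusions and the comparisons involving ${\bf GMB}_3$ in (2) are handled by witnesses such as $B(M(\Dc_{2,2,2}))=P_{K_{2,2,2}}$ (separating ${\bf GMB}_3$ from ${\bf Order}_3$, ${\bf Stab}_3$, and ${\bf MI}_3$), the rank-$3$ analogue of the edge polytope appearing in Proposition~\ref{prop:n=2} (for ${\bf Edge}_3\not\subset {\bf GMB}_3$), and a stable set polytope of a small perfect graph outside the families $G_{s_1,s_2,s_3,p}$ and $G(W_{s,t,p,q})$ (for ${\bf Stab}_3\not\subset {\bf GMB}_3$). The last witness also certifies the strict inclusion ${\bf GMB}_3\subsetneq {\bf Stab}_3\cup {\bf Edge}_3$.

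The main obstacle I anticipate is the ${\bf Edge}_3$ exclusion of $P(U_{2,3})$, for which dimension and $f$-vector alone do not suffice and a genuine {\tt MAGMA} enumeration of candidate graphs is required. The second delicate step is separating ${\bf Stab}_3$ and ${\bf Edge}_3$ from ${\bf GMB}_3$: one must simultaneously certify that the chosen witness is inequivalent to every polytope from the three infinite base-polytope families $\Bc_{s_1,s_2,s_3,p}$, $\Cc_{s,t,p,q}$, and $\Dc_{s_1+1,s_2+1,s_3+1}$. This reduces to a finite {\tt MAGMA} search once dimension and facet count are fixed, but constitutes the most extensive case analysis of the argument.
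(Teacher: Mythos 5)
Your overall strategy matches the paper's: the same witness $P(U_{2,3})=P(M(\Dc_{1,1,1}))$ for part (1), the same witness $P_{K_{2,2,2}}=B(M(\Dc_{2,2,2}))$ for separating ${\bf GMB}_3$ from ${\bf Order}_3$, ${\bf Stab}_3$ and ${\bf MI}_3$, and the same structural route ${\bf GMB}_3\subset{\bf Stab}_3\cup{\bf Edge}_3$ via Lemmas~\ref{lem:equiBS}, \ref{lem:CcongO}, \ref{lem:DcongE} (your verification that $W_{s,t,p,q}$ has no X-shape subposet is correct, although $\mu_2$ is merely maximal, not the maximum). However, there is a recurring gap: you treat ${\bf GMB}_3$ as if Theorem~\ref{thm:smallmultibasepolytope} exhausted it, but that theorem only classifies $2$-connected graphs, while ${\bf GMB}_3$ by definition also contains base polytopes of disconnected graphic matroids (products of lower-rank base polytopes, e.g.\ $B(M(\bigsqcup_{i=1}^4\Ac_2))$, a $4$-dimensional polytope of rank $3$). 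This breaks your argument in two places. First, your claim that ``every element of ${\bf GMB}_3$ is one of the three listed families'' is false, so the inclusion ${\bf GMB}_3\subset{\bf Stab}_3\cup{\bf Edge}_3$ needs the extra case of $\ge 2$ components, which the paper handles by reducing to Propositions~\ref{prop:n=01} and \ref{prop:n=2} and the fact that a product of stable set polytopes is again a stable set polytope. Second, your dimension count for $P(U_{2,3})\notin{\bf GMB}_3$ only rules out the connected families; the conclusion survives (the disconnected elements of ${\bf GMB}_3$ still have dimension $\ge 4$, the minimum being a product of four segments), but as written the argument is incomplete.

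The second concrete omission is in part (2): you give no witness for ${\bf GMB}_3\not\subset{\bf Edge}_3$, which is one half of the required non-inclusion between ${\bf Edge}_3$ and ${\bf GMB}_3$, and it cannot be supplied by $P_{K_{2,2,2}}$ (which lies in ${\bf Edge}_3$). The paper's witness is precisely the disconnected example $B(M(\bigsqcup_{i=1}^4\Ac_2))$, which your connected-only framework would not produce. Similarly, your list of witnesses does not cover ${\bf Order}_3\not\subset{\bf MI}_3$, ${\bf Stab}_3\not\subset{\bf MI}_3$, or ${\bf Order}_3\not\subset{\bf GMB}_3$; the paper settles all three with the single order polytope $\Oc_\Pi$ of the $4$-element poset with $p_1,p_2\prec p_3,p_4$, which lies in ${\bf Order}_3\cap{\bf Stab}_3$ by Theorem~\ref{X}. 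You should add these witnesses (or an equivalent choice) to make the case analysis complete.
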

\begin{proof}
    (1) We can see that $P(M(\Dc_{0,0,0}))\notin {\bf Order}_3\cup{\bf Stab}_3\cup{\bf Edge}_3\cup{\bf GMB}_3$.
    In fact, $P(M(\Dc_{0,0,0}))$ is a 3-dimensional lattice polytope with 7 vertices, but {\tt MAGMA} confirms that there is no 3-dimensional polytope with 7 vertices which belongs to ${\bf Order}_3\cup{\bf Stab}_3\cup{\bf Edge}_3\cup{\bf GMB}_3$.
    
    Let $\Pi:=\set{p_1,p_2,p_3,p_4}$ be the poset equipped with the partial orders $p_1\prec p_3$, $p_1\prec p_4$, $p_2\prec p_3$ and $p_2\prec p_4$. 
    By using {\tt MAGMA}, we obtain $\Oc_\Pi\in {\bf Order}_3$ and $\Oc_\Pi\in {\bf Stab}_3$ from Theorem~\ref{X}.
    If there exists a matroid $M$ such that $P(M)$ is unimodularly equivalent to $\Oc_\Pi$, then $|E(M)|=\dim \Oc_\Pi=4$.
    We can check that the independence polytopes of such matroids are not unimodularly equivalent to $\Oc_\Pi$. 

    Let $G=K_{2,2,2}$. From \cite[Theorem~1.3]{matsushita2022conic} and Lemma~\ref{lem:DcongE}, we have $P_{G}\in {\bf Edge}_3$ and $P_{G}\in {\bf GMB}_3$.
    If there exists a matroid $M$ such that $P(M)$ is unimodularly equivalent to $P_{G}$, then $|E(M)|=\dim P_{G}=5$.
    {\tt MAGMA} guarantees that the independence polytopes of such matroids are not unimodularly equivalent to $P_{G}$.
    Thus, there is no inclusion between $\Sc$ and ${\bf MI}_3$ for any $\Sc \in \set{{\bf Order}_3,{\bf Stab}_3,{\bf Edge}_3,{\bf GMB}_3}$.

    \medskip

    (2) Let $\TT:=\set{{\bf Order}_3,{\bf Stab}_3,{\bf Edge}_3,{\bf MI}_3}$.
    We already see in (1) that there is no inclusion between ${\bf GMB}_3$ and ${\bf MI}_3$, and that $P_{K_{2,2,2}}\in {\bf GMB}_3$.
    From \cite[Section~5.3]{matsushita2022three}, we have $P_{K_{2,2,2}}\notin {\bf Order}_3\cup {\bf Stab}_3$, so ${\bf GMB}_3 \not\subset{\bf Order}_3\cup {\bf Stab}_3$.
    Moreover, we can check that $B(M(\bigsqcup_{i=1}^4\Ac_2))\notin {\bf Edge}_3$ by focusing on the dimension of $B(M(\bigsqcup_{i=1}^4\Ac_2))$ and using {\tt MAGMA}.
    Thus, we have ${\bf GMB}_3\not\subset \Tc$ for any $\Tc\in \TT$.

    Conversely, let $\Pi$ be the poset defined in (1) above and let $G$ be the graph on the vertex set $V(G)=\set{1,2,3,4,5}$ with the edge set
    $$E(G)=\set{\set{1,2},\set{2,3},\set{3,4},\set{4,5},\set{1,5},\set{1,3},\set{1,4}},$$
    then we have $\Oc_\Pi\in \rbra{{\bf Order}_3\cap {\bf Stab}_3}\setminus {\bf GMB}_3$ and $P_G\in {\bf Edge}_3 \setminus {\bf GMB}_3$. Their proofs can be performed in the similar way to the above discussions.
    Therefore, we get $\Tc \not\subset {\bf GMB}_3$ for any $\Tc\in \TT$.

    Finally, we show that ${\bf GMB}_3\subsetneq {\bf Stab}_3\cup{\bf Edge}_3$.
    Let $M$ be a graphic matroid with $\rank B(M)=3$.
    If $M$ has at least 2 connected components, then $B(M)\in {\bf Stab}_3$ from Propositions~\ref{prop:n=01} and \ref{prop:n=2}.
    If $M$ is connected, then $B(M)$ is unimodularly equivalent to a base polytope of the form (i), (ii) or (iii) in Theorem~\ref{thm:smallmultibasepolytope} (4).
    By Lemmas~\ref{lem:equiBS}, \ref{lem:CcongO} and \ref{lem:DcongE}, we obtain $B(M)\in {\bf Stab}_3\cup{\bf Edge}_3$, and hence ${\bf GMB}_3\subsetneq {\bf Stab}_3\cup{\bf Edge}_3$.
\end{proof}

\bibliographystyle{plain} 
\bibliography{ref}

\end{document}